\newcommand{\ju}[1]{[\![{#1}]\!]}
\newcommand{\jut}[1]{[\![{#1}]\!]_{t}}
\newcommand{\vct}[1]{\mathbf{#1}}
\newcommand{\cC}{\mathcal{C}}
\newcommand{\curl}{\mathrm{curl}}
\newcommand{\dvg}{\mathrm{div}}
\newcommand{\dx}{\mathrm{d}{\vx}}
\newcommand{\dy}{\mathrm{d}{\vy}}
\newcommand{\dz}{\mathrm{d}{\vz}}
\newcommand{\Ehin}{\mathcal{E}_{h}^{I}}
\newcommand{\Fhin}{\mathcal{F}_{h}^{I}}
\newcommand{\jac}{J}
\newcommand{\jhdel}{\hat{\j}^{\Delta}}
\newcommand{\Nd}{R}
\newcommand{\Qh}{\mathcal{Q}_h}
\newcommand{\RT}{D}
\newcommand{\Th}{\mathcal{T}_h}
\newcommand{\vG}{\vct{G}}
\newcommand{\vH}{\vct{H}}
\newcommand{\vhGdel}{\hat{\vG}^{\Delta}}
\newcommand{\vhHdel}{\hat{\vH}^{\Delta}}
\newcommand{\vhjdel}{\hat{\vjj}^{\Delta}}
\newcommand{\vht}{\hat{\vct{t}}}
\newcommand{\vj}{\vct{j}}
\newcommand{\vjj}{\boldsymbol{\j}}
\newcommand{\vjdel}{\vct{j}^{\Delta}}
\newcommand{\vn}{\hat{\vct{n}}}
\newcommand{\vR}{\vct{R}}
\newcommand{\vtH}{\tilde{\vH}}
\newcommand{\vtHdel}{\vtH^{\Delta}}
\newcommand{\vtht}{\boldsymbol{\theta}}
\newcommand{\vu}{\vct{u}}
\newcommand{\vv}{\vct{v}}
\newcommand{\vw}{\vct{w}}
\newcommand{\vx}{\vct{x}}
\newcommand{\vy}{\vct{y}}
\newcommand{\vz}{\vct{z}}
\newcommand{\vzero}{\vct{0}}
\newtheorem{thm}{Theorem}[section]
\newtheorem{cor}[thm]{Corollary}
\newtheorem{rem}[thm]{Remark}
\title[An a posteriori error estimator for N\'ed\'elec elements]{An equilibrated a posteriori error estimator for arbitrary-order N\'ed\'elec elements for magnetostatic problems}
\author{Joscha Gedicke$^{1*}$, Sjoerd Geevers$^{2*}$, Ilaria Perugia$^{2*}$}
\address{
$^1$ Institute for Numerical Simulation, University of Bonn \\
Endenicher Allee 19b, 53115 Bonn, Germany \\
$^2$ Faculty of Mathematics, University of Vienna\\
Oskar-Morgenstern-Platz 1, 1090 Vienna, Austria 
}
\thanks{*The first author has been funded by the Austrian Science Fund (FWF) through the project M~2640-N32. The second and third authors have been funded by FWF through the project F~65 ``Taming Complexity in Partial Differential Systems''. The first and third authors have also been funded by the FWF through the project P~29197-N32.}
\begin{document}

\maketitle

\begin{abstract}
We present a novel \textit{a posteriori} error estimator for N\'ed\'elec elements for magnetostatic problems that is constant-free, i.e. it provides an upper bound on the error that does not involve a generic constant.The estimator is based on equilibration of the magnetic field and only involves small local problems that can be solved in parallel. Such an error estimator is already available for the lowest-degree N\'ed\'elec element [D. Braess, J. Sch\"oberl, \textit{Equilibrated residual error estimator for edge elements}, Math. Comp. 77 (2008)] and requires solving local problems on vertex patches. The novelty of our estimator is that it can be applied to N\'ed\'elec elements of arbitrary degree. Furthermore, our estimator does not require solving problems on vertex patches, but instead requires solving problems on only single elements, single faces, and very small sets of nodes. We prove reliability and efficiency of the estimator and present several numerical examples that confirm this.
\end{abstract}
\medskip

{\footnotesize
\noindent
{\bf Keywords} {{\em A posteriori} error analysis, high-order N\'ed\'elec
  elements, magnetostatic problem, equilibration principle}\\[0.1cm]
\noindent
{\bf Mathematics Subject Classification } {65N15, 65N30, 65N50}}
\medskip

\section{Introduction}
We consider an \textit{a posteriori} error estimator for finite element methods for solving equations of the form $\nabla\times\mu^{-1}(\nabla\times\vu)=\vj$. These equations are related to magnetostatics but also appear in eddy current models for non-conductive media. 

The first \textit{a posteriori} error estimator in this context was introduced and analysed in \cite{beck00}. It is a residual-type estimator and provides bounds of the form
\begin{align*}
c_0\, \text{estimator} \leq \text{error} &\leq c_1\,\text{estimator},
\end{align*}
up to some higher-order data oscillation terms, where $c_0,c_1$ are positive constants that do not depend on the mesh resolution. Similar bounds can be obtained by hierarchical error estimators; see, e.g., \cite{beck99}, under the assumption of a saturation condition, and by Zienkiewicz--Zhu-type error estimators; see, e.g., \cite{nicaise05}. A drawback of these estimators is that the constants $c_0$ and $c_1$ are usually unknown, resulting in significant overestimation or underestimation of the real error.  

Equilibration-based error estimators can circumvent this problem. Often attributed to Prager and Synge \cite{prager47}, these estimators have become a major research topic; for a recent overview, see, for example, \cite{ern15} and the references therein. An equilibration-based error estimator was introduced for magnetostatics in \cite{braess08} and provides bounds of the form
\begin{align*}
c_0\, \text{estimator} \leq \text{error} &\leq \text{estimator}
\end{align*} 
up to some higher-order data oscillation terms. In other words, it provides a constant-free upper bound on the error. A different equilibration-based error estimator for magnetostatics was introduced in \cite{tang13} and, for an eddy current problem, in \cite{creuse17,creuse19b}. Constant-free upper bounds are also obtained by the functional estimate in \cite{neittaanmaki10}, when selecting a proper function $y$ in their estimator, and by the recovery-type error estimator in \cite{cai16}, in case the equations contain an additional term $\beta\vu$, with $\beta>0$. 

A drawback of the estimators in \cite{tang13,creuse17,creuse19b,neittaanmaki10} is that they require
solving a global problem. The estimator in \cite{braess08}, on the other hand, only involves solving local problems related to vertex patches. However, the latter estimator is defined for N\'ed\'elec elements of the lowest degree only.
In this paper, we present a new equilibration-based constant-free error estimator that can be applied to N\'ed\'elec elements of arbitrary degree. Furthermore, our estimator involves solving problems on only single elements, single faces, and very small sets of nodes.

The paper is constructed as follows: We firstly introduce a finite element method for solving magnetostatic problems in Section~\ref{sec:intro}. We then derive our error estimator step by step in Section~\ref{sec:errEst}, with a summary given in Section~\ref{sec:errEstOv}, and prove its reliability and efficiency in Section~\ref{sec:eff}. Numerical examples confirming the reliability and efficiency of our estimator are presented in Section~\ref{sec:numerics}, and an overall summary is given in Section~\ref{sec:conclusion}.

\section{A finite element method for magnetostatic problems}
\label{sec:intro}
Let $\Omega\subset\mathbb{R}^3$ be an open, bounded, simply connected, polyhedral domain with a connected Lipschitz boundary $\partial\Omega$. In case of a linear, isotropic medium and a perfectly conducting boundary, the static magnetic field $\vH:\Omega\rightarrow\mathbb{R}^3$ satisfies the equations
\begin{align*}
\nabla\times\vH &= \vj &&\text{in }\Omega, \\
\nabla\cdot\mu\vH &= 0 &&\text{in }\Omega, \\
\vn\cdot\mu\vH &= 0 &&\text{on }\partial\Omega,
\end{align*}
where $\nabla$ is the vector of differential operators $(\partial_1,\partial_2,\partial_3)$, $\times$ and $\cdot$ denote the outer- and inner product, respectively, (therefore, $\nabla\times$ and $\nabla\cdot$ are the curl- and divergence operator, respectively), $\vn$ denotes the outward pointing unit normal vector, $\mu:\Omega\rightarrow\mathbb{R}^+$, with $\mu_0\leq\mu\leq\mu_1$ for some positive constants $\mu_0$ and $\mu_1$, is a scalar magnetic permeability, and $\vj:\Omega\rightarrow\mathbb{R}^3$ is a given divergence-free current density. The first equality is known as Amp\`ere's law and the second as Gauss's law for magnetism. 

These equations can be solved by writing $\vH=\mu^{-1}\nabla\times\vu$, where $\vu:\Omega\rightarrow\mathbb{R}^3$ is a vector potential, and by solving the following problem for $\vu$:
\begin{subequations}
\label{eq:curlcurl}
\begin{align}
\nabla\times(\mu^{-1}\nabla\times \vu) &= \vj &&\text{in }\Omega, \\ 
\nabla\cdot\vu &= 0 &&\text{in }\Omega, \\
\vn\times\vu &= \vzero &&\text{on }\partial\Omega.
\end{align}
\end{subequations} 
The second condition is only added to ensure uniqueness of $\vu$ and is known as Coulomb's gauge. 

Now, for any domain $D\in\mathbb{R}^n$, let $L^2(D)^m$ denote the standard Lebesque space of square-integrable vector-valued functions $\vu:D\rightarrow\mathbb{R}^m$ equipped with norm $\|\vu\|_D^2:=\int_D \vu\cdot\vu \;\dx$ and inner product $(\vu,\vw)_D=\int_D \vu\cdot\vw\;\dx$, and define the following Sobolev spaces:
\begin{align*}
H^1(\Omega) &:= \{\phi\in L^2(\Omega) \;|\; \nabla \phi\in L^2(\Omega)^3 \}, \\
H_0^1(\Omega) &:= \{\phi\in H^1(\Omega) \;|\; \phi=0 \text{ on }\partial\Omega \}, \\
H(\curl; \Omega) &:= \{\vu\in L^2(\Omega)^3 \;|\; \nabla\times\vu\in L^2(\Omega)^3 \}, \\
H_0(\curl; \Omega) &:= \{\vu\in H(\curl;\Omega) \;|\; \vn\times\vu=\vzero \text{ on }\partial\Omega \}, \\
H(\dvg; \Omega) &:=  \{\vu\in L^2(\Omega)^3 \;|\; \nabla\cdot\vu\in L^2(\Omega) \}, \\
H(\dvg^0; \Omega) &:=  \{\vu\in H(\dvg;\Omega) \;|\; \nabla\cdot\vu=0 \}.
\end{align*}
The weak formulation of problem \eqref{eq:curlcurl} is finding $\vu\in H_0(\curl;\Omega)\cap H(\dvg^0;\Omega)$ such that
\begin{align}
\label{eq:WF}
(\mu^{-1}\nabla\times \vu,\nabla\times\vw)_{\Omega} &= (\vj,\vw)_{\Omega} &&\forall \vw\in H_0(\curl;\Omega),
\end{align}
which is a well-posed problem \cite[Theorem 5.9]{hiptmair02}.

The solution of the weak formulation can be approximated using a finite element method. Let $T$ be a tetrahedron and define $P_k(T)$ to be the space of polynomials on $T$ of degree $k$ or less. Also, define the N\'ed\'elec space of the first kind $\Nd_k(T)$ and the Raviart-Thomas space $\RT_k(T)$ by
\begin{align*}
\Nd_k(T) &:= \{\vu\in P_k(T)^3 \;|\; \vu(\vx) = \vv(\vx) + \vx\times\vw(\vx) \text{ for some }\vv,\vw\in P_{k-1}(T)^3\}, \\
\RT_k(T) &:= \{\vu\in P_k(T)^3 \;|\; \vu(\vx) = \vv(\vx) + \vx w(\vx) \text{ for some }\vv\in P_{k-1}(T)^3, w\in P_{k-1}(T)\}.
\end{align*}
Finally, let $\Th$ denote a tessellation of $\Omega$ into tetrahedra with a diameter smaller than or equal to $h$, let $P_k^{-1}(\Th)$, $\Nd_k^{-1}(\Th)$, and $\RT_k^{-1}(\Th)$ denote the discontinuous spaces given by
\begin{align*}
P_k^{-1}(\Th) &:= \{\phi\in L^2(\Omega) \;|\; \phi|_T \in P_k(T) \text{ for all }T\in\Th \}, \\
\Nd_k^{-1}(\Th) &:= \{\vu\in L^2(\Omega)^3 \;|\; \vu|_T \in \Nd_k(T) \text{ for all }T\in\Th \}, \\
\RT_k^{-1}(\Th) &:= \{\vu\in L^2(\Omega)^3 \;|\; \vu|_T \in \RT_k(T) \text{ for all }T\in\Th \}, 
\end{align*}
and define 
\begin{align*}
P_k(\Th)&:=P^{-1}_k(\Th)\cap H^1(\Omega), &  P_{k,0}(\Th)&:=P^{-1}_k(\Th)\cap H^1_0(\Omega),\\
\Nd_k(\Th)&:=\Nd^{-1}_k(\Th)\cap H(\curl;\Omega),  & \Nd_{k,0}(\Th)&:=\Nd^{-1}_k(\Th)\cap H_0(\curl;\Omega),\\
\RT_k(\Th)&:=\RT^{-1}_k(\Th)\cap H(\dvg;\Omega).  &
\end{align*}%
We define the finite element approximation for the magnetic vector potential as the vector field $\vu_h\in \Nd_{k,0}(\Th)$ that solves
\begin{subequations}
\label{eq:FEM}
\begin{align}
(\mu^{-1}\nabla\times \vu_h,\nabla\times\vw)_{\Omega} &= (\vj,\vw)_{\Omega} &&\forall \vw\in \Nd_{k,0}(\Th), \label{eq:FEMa}\\
(\vu_h,\nabla\psi)_{\Omega} &=0, &&\forall \psi\in P_{k,0}(\mathcal{T}_h). \label{eq:FEMb}
\end{align}
\end{subequations}
The approximation of the magnetic field is then given by
\begin{align*}
\vH_h:=\mu^{-1}\nabla\times\vu_h,
\end{align*}
which converges quasi-optimally as the mesh width $h$ tends to zero \cite[Theorem 5.10]{hiptmair02}. 

In the next section, we show how we can obtain a reliable and efficient estimator for $\|\vH-\vH_h\|_{\Omega}$.

\section{An equilibration-based a posteriori error estimator}
\label{sec:errEst}

We follow~\cite{braess08} and present an \textit{a posteriori} error estimator that is based on the following result.
\begin{thm}[{\cite[Thm.~10]{braess08}}]
\label{thm:estimator}
Let $\vu$ be the solution to (\ref{eq:WF}), let $\vu_h$ be the solution of (\ref{eq:FEM}), and set $\vH:=\mu^{-1}\nabla\times\vu$ and $\vH_h:=\mu^{-1}\nabla\times\vu_h$. If $\vtH\in H(\curl;\Omega)$ satisfies the equilibrium condition
\begin{align}
\label{eq:vtH}
\nabla\times\vtH &= \vj,
\end{align}
then
\begin{align}
\label{eq:errEst1}
\|\mu^{1/2}(\vH-\vH_h)\|_{\Omega} &\leq \|\mu^{1/2}(\vtH -\vH_h)\|_{\Omega}.
\end{align}
\end{thm}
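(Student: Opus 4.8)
The plan is to read this as a Prager--Synge (hypercircle) identity: since both $\vtH$ and the exact field $\vH$ have the same curl $\vj$, their difference is curl-free, and in the $\mu$-weighted inner product it turns out to be orthogonal to the computable error $\vH-\vH_h$. First I would record the two ``equilibrium'' facts to be combined. On the one hand, $\nabla\times\vtH=\vj$ holds by hypothesis in $L^2(\Omega)^3$. On the other hand, since $\vH=\mu^{-1}\nabla\times\vu$, testing the weak formulation \eqref{eq:WF} with an arbitrary $\vw\in H_0(\curl;\Omega)$ gives $(\vH,\nabla\times\vw)_\Omega=(\vj,\vw)_\Omega$, which is precisely the weak form of $\nabla\times\vH=\vj$ incorporating the boundary condition.

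Next I would split $\vtH-\vH_h=(\vtH-\vH)+(\vH-\vH_h)$ and expand, using that $\mu$ is scalar:
\begin{align*}
\|\mu^{1/2}(\vtH-\vH_h)\|_\Omega^2=\|\mu^{1/2}(\vtH-\vH)\|_\Omega^2+2\,(\mu(\vtH-\vH),\vH-\vH_h)_\Omega+\|\mu^{1/2}(\vH-\vH_h)\|_\Omega^2.
\end{align*}
The first and last terms are nonnegative, so the entire estimate reduces to showing that the cross term vanishes.

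To treat the cross term, the key observation is that $\mu(\vH-\vH_h)=\nabla\times(\vu-\vu_h)$, with $\vu-\vu_h\in H_0(\curl;\Omega)$. Setting $\vw:=\vu-\vu_h$, the cross term equals $(\vtH-\vH,\nabla\times\vw)_\Omega$, and I would evaluate its two contributions separately. For $\vH$, the weak relation above gives directly $(\vH,\nabla\times\vw)_\Omega=(\vj,\vw)_\Omega$. For $\vtH$, I would invoke the Green's formula for the curl: since $\vtH\in H(\curl;\Omega)$ while $\vw\in H_0(\curl;\Omega)$ has vanishing tangential trace, the boundary term $\langle\vn\times\vtH,\vw\rangle_{\partial\Omega}$ drops and $(\vtH,\nabla\times\vw)_\Omega=(\nabla\times\vtH,\vw)_\Omega=(\vj,\vw)_\Omega$. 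Subtracting yields $(\vtH-\vH,\nabla\times\vw)_\Omega=0$, so the cross term vanishes and the claimed inequality follows by discarding the nonnegative term $\|\mu^{1/2}(\vtH-\vH)\|_\Omega^2$.

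The only delicate point is the integration by parts for $\vtH$: it rests on the duality between $H(\curl;\Omega)$ and $H_0(\curl;\Omega)$, that is, on the vanishing tangential trace of the test field $\vw=\vu-\vu_h$ on $\partial\Omega$. This is exactly why the homogeneous boundary condition built into $\Nd_{k,0}(\Th)$ and $H_0(\curl;\Omega)$ is indispensable; without it the boundary pairing $\langle\vn\times\vtH,\vw\rangle_{\partial\Omega}$ would not cancel and the orthogonality would fail. Everything else is a routine Pythagorean computation.
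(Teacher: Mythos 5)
Your proposal is correct and is essentially the paper's own argument: the expansion of $\|\mu^{1/2}(\vtH-\vH_h)\|_\Omega^2$ with a vanishing cross term is exactly the Pythagoras identity plus the orthogonality of $\mu^{1/2}(\vtH-\vH)$ and $\mu^{1/2}(\vH-\vH_h)$ that the paper establishes by writing $\mu(\vH-\vH_h)=\nabla\times(\vu-\vu_h)$ and integrating by parts against the test field $\vu-\vu_h\in H_0(\curl;\Omega)$. The only cosmetic difference is that you evaluate the $\vH$-contribution directly from the weak formulation \eqref{eq:WF} while the paper integrates by parts on $\vtH-\vH$ as a whole; both hinge on the same vanishing tangential trace, which you correctly identify as the delicate point.
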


\begin{proof}
The result follows from the orthogonality of $\mu^{1/2}(\vtH-\vH)$ and $\mu^{1/2}(\vH-\vH_h)$:
\begin{align*}
\big(\mu^{1/2}(\vtH-\vH),\mu^{1/2}(\vH-\vH_h)\big)_{\Omega} &=\big(\mu^{1/2}(\vtH-\vH), \mu^{-1/2}\nabla\times(\vu-\vu_h)\big)_{\Omega} \\ 
&= \big(\vtH-\vH,\nabla\times(\vu-\vu_h) \big)_{\Omega} \\ 
&= \big(\nabla\times(\vtH-\vH),\vu-\vu_h \big)_{\Omega} \\ 
&= (\vj-\vj, \vu-\vu_h)_{\Omega} \\
&= 0
\end{align*}
and Pythagoras's theorem
\begin{align}
\label{eq:pythagoras}
\|\mu^{1/2}(\vtH-\vH_h)\|^2_{\Omega} &= \|\mu^{1/2}(\vtH -\vH)\|^2_{\Omega} + \|\mu^{1/2}(\vH -\vH_h)\|^2_{\Omega}.
\end{align}
\end{proof}

\begin{rem}
Equation (\ref{eq:pythagoras}) is also known as a Prager--Synge type equation and obtaining an error estimator from such an equation is also known as the hypercircle method. Furthermore, equation (\ref{eq:vtH}) is known as the equilibrium condition and using the numerical approximation $\vH_h$ to obtain a solution to this equation is called equilibration of $\vH_h$.
\end{rem}

\begin{cor}
\label{cor:estimator}
Let $\vu$ be the solution to (\ref{eq:WF}), let $\vu_h$ be the solution of (\ref{eq:FEM}), set $\vH:=\mu^{-1}\nabla\times\vu$ and $\vH_h:=\mu^{-1}\nabla\times\vu_h$, and let $\vj_h:=\nabla\times\vH_h$ be the discrete current distribution. If $\vtHdel\in L^2(\Omega)^3$ satisfies the (residual) equilibrium condition
\begin{align}
\label{eq:vtHdel}
\nabla\times \vtHdel &= \vj-\vj_h,
\end{align}
which is an identity of distributions, then
\begin{align}
\label{eq:errEst2}
\|\mu^{1/2}(\vH-\vH_h)\|_{\Omega} &\leq \|\mu^{1/2}\vtHdel\|_{\Omega}.
\end{align}
\end{cor}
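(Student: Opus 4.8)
The plan is to reduce the statement directly to Theorem~\ref{thm:estimator} by constructing an admissible equilibrated field out of $\vtHdel$. First I would set
\begin{align*}
\vtH := \vH_h + \vtHdel,
\end{align*}
and observe that, by linearity of the curl, the residual equilibrium condition \eqref{eq:vtHdel} rearranges into the full equilibrium condition \eqref{eq:vtH}:
\begin{align*}
\nabla\times\vtH = \nabla\times\vH_h + \nabla\times\vtHdel = \vj_h + (\vj-\vj_h) = \vj,
\end{align*}
where all curls are read in the distributional sense and we have used $\vj_h = \nabla\times\vH_h$. This is the whole point of passing from $\vtH$ to the residual field $\vtHdel$: it is algebraically the same shift, so the two equilibrium conditions are equivalent under $\vtH = \vH_h + \vtHdel$.

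The key step is then to check that this $\vtH$ is indeed admissible for Theorem~\ref{thm:estimator}, i.e.\ that $\vtH\in H(\curl;\Omega)$. Membership in $L^2(\Omega)^3$ is immediate, since $\vH_h = \mu^{-1}\nabla\times\vu_h \in L^2(\Omega)^3$ and $\vtHdel\in L^2(\Omega)^3$ by hypothesis. For the curl, the computation above shows that the distributional curl of $\vtH$ equals $\vj$, and $\vj\in L^2(\Omega)^3$ since it is the given (divergence-free) current density. Hence $\nabla\times\vtH\in L^2(\Omega)^3$ and therefore $\vtH\in H(\curl;\Omega)$. With admissibility established, Theorem~\ref{thm:estimator} applies and gives
\begin{align*}
\|\mu^{1/2}(\vH-\vH_h)\|_{\Omega} \leq \|\mu^{1/2}(\vtH-\vH_h)\|_{\Omega} = \|\mu^{1/2}\vtHdel\|_{\Omega},
\end{align*}
where the final equality is just $\vtH - \vH_h = \vtHdel$. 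This is exactly \eqref{eq:errEst2}.

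I expect the only genuinely subtle point to be the distributional reading of $\vj_h = \nabla\times\vH_h$, which is why the corollary is careful to call \eqref{eq:vtHdel} an identity of distributions. Since $\vH_h$ is only piecewise polynomial, its curl need not be an $L^2$ function: it generically carries face-supported singular contributions coming from the tangential jumps of $\vH_h$ across interelement faces. The role of \eqref{eq:vtHdel} is precisely to have $\nabla\times\vtHdel$ cancel those singular contributions, so that the sum $\vtH$ has a genuine $L^2$ curl equal to $\vj$. Thus the ``hard part'' is conceptual rather than computational, namely to justify that the distributional equation \eqref{eq:vtHdel} is the correct condition to guarantee $\vtH\in H(\curl;\Omega)$; once this is granted, the remainder is an immediate application of Theorem~\ref{thm:estimator}.
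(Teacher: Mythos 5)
Your proposal is correct and follows essentially the same route as the paper: both define $\vtH := \vH_h + \vtHdel$, verify $\nabla\times\vtH = \vj$ in the distributional sense, conclude $\vtH\in H(\curl;\Omega)$ since $\vj\in L^2(\Omega)^3$, and then invoke Theorem~\ref{thm:estimator}. Your closing remark about the face-supported singular contributions of $\nabla\times\vH_h$ is accurate and anticipates exactly the decomposition carried out later in Section~\ref{sec:derivation}.
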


\begin{proof}
Since (\ref{eq:vtHdel}) is an identity of distributions, we can equivalently write
\begin{align*}
\langle \nabla\times\vtHdel,\vw \rangle &= \langle \vj-\vj_h, \vw \rangle &&\forall\vw\in\cC_0^{\infty}(\Omega)^3,
\end{align*}
where $\langle\cdot,\cdot\rangle$ denotes the application of a distribution to a function in $\cC_0^{\infty}(\Omega)^3$. Now, set $\vtH:=\vtHdel+\vH_h \in L^2(\Omega)$. Using the definition $\vj_h:=\nabla\times\vH_h$, we obtain
\begin{align*}
\langle \nabla\times\vtH,\vw \rangle &= \langle \nabla\times\vtHdel + \nabla\times\vH_h,\vw \rangle &&\\
& = \langle \vj-\vj_h + \nabla\times\vH_h, \vw \rangle && \\
&= \langle \vj, \vw \rangle &&\forall\vw\in\cC_0^{\infty}(\Omega)^3.
\end{align*}
From this, it follows that $\nabla\times\vtH=\vj\in L^2(\Omega)^{3}$, so $\vtH$ is in $H(\curl;\Omega)$ and satisfies equilibrium condition (\ref{eq:vtH}). Inequality (\ref{eq:errEst2}) then follows from Theorem~\ref{thm:estimator}.
\end{proof}

From Corollary \ref{cor:estimator}, it follows that a constant-free upper bound on the error can be obtained from any field $\vtHdel$ that satisfies \eqref{eq:vtHdel}.

An error estimator of this type was first introduced in \cite{braess08}, where it is referred to as an equilibrated residual error estimator. There, $\vj-\vj_h$ is decomposed into a sum of local divergence-free current distributions $\vjdel_i$ that have support on only a single vertex patch. The error estimator is then obtained by solving local problems of the form $\nabla\times\vtHdel_i=\vjdel_i$ for each vertex patch and by then taking the sum of all local fields $\vtHdel_i$. It is, however, not straightforward to decompose $\vj-\vj_h$ into local divergence-free current distributions. An explicit expression for $\vjdel_i$ is given in \cite{braess08} for the lowest-degree N\'ed\'elec element, but this expression cannot be readily extended to basis functions of arbitrary degree.

Here, we instead present an error estimator based on equilibration condition~\eqref{eq:vtHdel} that can be applied to elements of \emph{arbitrary} degree. Furthermore, instead of solving local problems on vertex patches, our estimator requires solving problems on only single elements, single faces, and small sets of nodes. The assumptions and a step-by-step derivation of the estimator are given in Sections~\ref{sec:assumptions} and \ref{sec:derivation} below, a brief summary is given in Section~\ref{sec:errEstOv}, and reliability and efficiency are proven in Section \ref{sec:eff}.

\subsection{Assumptions}
\label{sec:assumptions}
In order to compute the error estimator, we use polynomial function spaces of degree $k'\geq k$, where $k$ denotes the degree of the finite element approximation $\vu_h$, and assume that: 
\begin{itemize}
  \item[A1.] The magnetic permeability $\mu$ is piecewise constant. In particular, the domain $\Omega$ can be partitioned into a finite set of polyhedral subdomains $\{\Omega_i\}$ such that $\mu$ is constant on each subdomain $\Omega_i$. Furthermore, the mesh is assumed to be aligned with this partition so that $\mu$ is constant within each element. 
  \item[A2.] The current density $\vj$ is in $\RT_{k'}(\mathcal{T}_h)\cap H(\dvg^0;\Omega)$.
\end{itemize}
Although assumption A2 does not hold in general, we can always replace $\vj$ by a suitable projection $\pi_h\vj$ by taking, for example, $\pi_h$ as the standard Raviart--Thomas interpolation operator corresponding to the $\RT_{k'}(\mathcal{T}_h)$ space \cite{nedelec80}. The error is in that case bounded by
\begin{align*}
\|\mu^{1/2}(\vH-\vH_h)\|^2_{\Omega} &\leq \|\mu^{1/2}\vtHdel\|^2_{\Omega} + \|\mu^{1/2}(\vH-\vH') \|_{\Omega},
\end{align*}
where $\vH':=\mu^{-1}\nabla\times\vu'$ and where $\vu'$ is the solution to \eqref{eq:WF} with $\vj$ replaced by $\pi_h\vj$. If $\vj$ is sufficiently smooth, i.e. $\vj$ can be extended to a function $\vj^*\in H(\dvg;\mathbb{R}^3)\cap H^{k'}(\mathbb{R}^3)^3$ with compact support, then the term $\|\mu^{1/2}(\vH-\vH') \|_{\Omega}$ is of order $h^{k'+1}$; see Theorem \ref{thm:projectionError} in the appendix. This means that, if {$k'\geq k$}, then $\|\mu^{1/2}(\vH-\vH') \|_{\Omega}$ converges with a higher rate than $\| \mu^{1/2}(\vH-\vH_h) \|_{\Omega}$ and so we may assume that the term $\|\mu^{1/2}(\vH-\vH') \|_{\Omega}$ is negligible.

\subsection{Derivation of the error estimator}\label{sec:derivation}
\label{sec:errEstDer}
Before we derive the error estimator, we first write $\vj_h=\nabla\times\vH_h$ in terms of element and face distributions. For every $\vw\in \cC_0^{\infty}(\Omega)^3\cup\Nd_{k,0}(\Th)$, we can write
\begin{align*}
\langle \vj_h,\vw \rangle &= \langle \nabla\times \vH_h, \vw \rangle \\
&=  (\vH_h,\nabla\times\vw)_{\Omega} \\
&= \sum_{T\in\Th} \left[ (\nabla\times\vH_h,\vw)_T +  (\vH_h,\vn_T\times\vw)_{\partial T} \right] \\
&= \sum_{T\in\Th} \left[ (\nabla\times\vH_h,\vw)_T +  (\vH_h,\vn_T\times\vw)_{\partial T\setminus\partial\Omega} \right] \\
&= \sum_{T\in\Th} \left[ (\nabla\times\vH_h,\vw)_T +  (-\vn_T\times\vH_h,\vw)_{\partial T\setminus\partial\Omega} \right] \\
&= \sum_{T\in\Th} (\nabla\times\vH_h,\vw)_T + \sum_{f\in\Fhin} (-\jut{\vH_h},\vw)_f \\
&=: \sum_{T\in\Th} (\vj_{h,T},\vw)_T + \sum_{f\in\Fhin} (\vj_{h,f},\vw)_f,
\end{align*}
where $\langle\cdot,\cdot\rangle$ denotes the application of a distribution to a $\mathcal{C}_0^{\infty}(\Omega)^3$ function, $\Fhin$ denotes the set of all internal faces, $\vn_{T}$ denotes the normal unit vector to $\partial T$ pointing outward of $T$, and $\jut{\vH}|_f:=(\vn^+\times\vH^+ + \vn^-\times\vH^-)|_{f}$ denotes the tangential jump operator, with $\vn^{\pm}:=\vn_{T^{\pm}}$, $\vH^{\pm}:=\vH|_{T^{\pm}}$, and $T^+$ and $T^-$ the two adjacent elements of $f$. 

Since $\vu_h\in \Nd_{k,0}(\Th)$ and $\mu$ is piecewise constant, we have that $\vH_h|_T\in P_{k-1}(T)^3$. Therefore, $\vj_{h,T}\in P_{k-2}(T)^3\subset \RT_{k-1}(T)$ if $k\geq2$ and $\vj_{h,T}=0$ if $k=1$, and $\vj_{h,f}\in \{\vu\in P_{k-1}(f)^3 \;|\; \vn_f\cdot\vu=0 \}\subset \RT_{k}(f)$, where $\vn_f:=\vn^+|_f$ is a normal unit vector of $f$ and $\RT_{k}(f)$ is given by
\begin{align*}
&\qquad \RT_{k}(f) := \\
&\{\vu\in L^2(f) \;|\; \vu(\vx) = \vn_f\times(\vv(\vx) + \vx w(\vx)) \text{ for some }\vv\in P_{k-1}(f)^3, w\in P_{k-1}(f)\}.
\end{align*}
In other words, $\vj_h$ can be represented by $\RT_{k-1}(T)$ functions on the elements and $\RT_{k}(f)$ face distributions on the internal faces. 

We define $\vjdel:=\vj-\vj_h$ and can write
\begin{align}
\label{eq:vjdel}
\langle \vjdel, \vw\rangle  &=  \sum_{T\in\Th} (\vjdel_{T},\vw)_T +  \sum_{f\in\Fhin} (\vjdel_{f},\vw)_f &&\forall \vw\in \mathcal{C}_0^{\infty}(\Omega)^3 \cup\Nd_{k,0}(\Th),
\end{align}
where 
\begin{equation}
\label{eq:jTjf}
\vjdel_{T} := \vj|_T - \vj_{h,T} = \vj|_T -\nabla\times\vH_h|_T \quad \text{and}\quad
\vjdel_{f} := -\vj_{h,f} = \jut{\vH_h}|_f.
\end{equation}

We look for a solution of (\ref{eq:vtHdel}) of the form $\vtHdel=\vhHdel + \nabla_h\phi$, with $\vhHdel\in \Nd^{-1}_{k'}(\Th)$ and $\phi\in P^{-1}_{k'}(\Th)$ and where $\nabla_h$ denotes the element-wise gradient operator. The term $\vhHdel$ will take care of the element distributions of $\vjdel$ and the term $\nabla_h\phi$ will take care of the remaining face distributions.

In the following, we firstly describe how to compute $\vhHdel$ in Section~\ref{sec: vhHdel} and characterize the remainder $\vjdel - \nabla\times\vhHdel$ in Section~\ref{sec:remainder}. We then describe how to compute the jumps of $\phi$ on internal faces in Section~\ref{sec:jumpphi} and explain how to reconstruct $\phi$ from its jumps in Section~\ref{sec:phi}.

\subsubsection{Computation of $\vhHdel$}\label{sec: vhHdel}

We compute $\vhHdel$ by solving the local problems
\begin{subequations}
\label{eq:vhHdel}
\begin{align}
\nabla\times\vhHdel |_T &= \vjdel_{T} , &&\label{eq:vhHdel1a}\\
(\mu\vhHdel,\nabla\psi)_T &= 0 &&\forall \psi\in P_{k'}(T), \label{eq:vhHdel1b}
\end{align}
\end{subequations}
for each element $T\in\Th$. This problem is well-defined and has a unique solution due to the discrete exact sequence property 
\begin{align*}
P_{k'}(T)\xlongrightarrow{\text{$\nabla$}} \Nd_{k'}(T)
\xlongrightarrow{\text{$\nabla\times\ $}} \RT_{k'}(T)
\xlongrightarrow{\text{$\nabla\cdot\ $}} P_{k'-1}(T),
\end{align*}
and since $\nabla\cdot \vjdel_T =\nabla\cdot\vj|_T -  \nabla\cdot(\nabla\times\vH_h)|_T= 0$ and $\vjdel_T=\vj|_T -\nabla\times\vH_h|_T\in \RT_{k'}(T)$. This last property follows from the fact that $\vj|_T\in\RT_{k'}$ due to assumption A2 and $\vH_h|_T=\mu^{-1}\nabla\times\vu_h|_T\in P_{k-1}(T)^3\subset P_{k'-1}(T)$ due to assumption A1. 

\subsubsection{Representation of the remainder $\vjdel - \nabla\times\vhHdel$}
\label{sec:remainder}

Set
\begin{align*}
\vhjdel &:= \vjdel - \nabla\times\vhHdel.
\end{align*}
For every $\vw\in \mathcal{C}_0^{\infty}(\Omega)^3 \cup\Nd_{k,0}(\Th)$, we can write
\begin{align*}
\langle \vhjdel, \vw \rangle &= \langle \vjdel - \nabla\times\vhHdel, \vw\rangle  \\
&= \langle \vjdel, \vw \rangle - \langle \nabla\times\vhHdel,  \vw\rangle  \\
&= \langle \vjdel, \vw \rangle - (\vhHdel, \nabla\times \vw)_{\Omega}   \\
&\stackrel{\eqref{eq:vjdel}}{=} \sum_{T\in\Th} (\vjdel_T,  \vw)_T + \sum_{f\in\Fhin} (\vjdel_f,\vw)_f - \sum_{T\in\Th} \left[  (\vhHdel,\vn_T\times\vw)_{\partial T} +  (\nabla\times\vhHdel,  \vw)_T \right]  \\
&= \sum_{T\in\Th} \left[ (\vjdel_T - \nabla\times \vhHdel, \vw)_T -  (\vhHdel,\vn_T\times\vw)_{\partial T} \right] + \sum_{f\in\Fhin}  (\vjdel_f,\vw)_f  \\ 
&\stackrel{\eqref{eq:jTjf},\eqref{eq:vhHdel1a}}{=} \sum_{T\in\Th}  \left[ (\vzero, \vw)_T - (\vhHdel,\vn_T\times\vw)_{\partial  T\setminus\partial\Omega} \right] + \sum_{f\in\Fhin}  (\jut{\vH_h},\vw)_f \\
&= \sum_{T\in\Th} (\vn_T\times\vhHdel,\vw)_{\partial  T\setminus\partial\Omega} + \sum_{f\in\Fhin} (\jut{\vH_h},\vw)_f   \\
&= \sum_{f\in\Fhin} (\jut{\vhHdel},\vw)_{f} +  (\jut{\vH_h},\vw)_f   \\
&= \sum_{f\in\Fhin} (\jut{\vH_h + \vhHdel},\vw)_f, 
\end{align*}
so
\begin{align}
\label{eq:vhjdel}
\langle \vhjdel, \vw \rangle &= \sum_{f\in\Fhin} (\jut{\vH_h + \vhHdel},\vw)_f =: \sum_{f\in\Fhin} (\vhjdel_f,\vw)_f 
\end{align}%
for all $ \vw\in \mathcal{C}_0^{\infty}(\Omega)^3 \cup\Nd_{k,0}(\Th)$. This means that $\vhjdel$ can be represented by only face distributions, and since $\vH_h\in P^{-1}_{k-1}(\Th)^3\subset P^{-1}_{k'-1}(\Th)^3$ and $\vhHdel\in \Nd^{-1}_{k'}(\Th)$, we have that $\jut{\vH_h + \vhHdel}|_f \in \RT_{k'}(f)$ and therefore $\vhjdel_f\in \RT_{k'}(f)$.

\subsubsection{Computation of the jumps of $\phi$ on internal faces}
\label{sec:jumpphi}

It now remains to find a $\phi\in P^{-1}_{k'}(\Th)$ such that
\begin{align*}
\nabla\times \nabla_h\phi &= \vhjdel.
\end{align*}
For every $\vw\in \mathcal{C}_0^{\infty}(\Omega)^3 \cup\Nd_{k,0}(\Th)$, we can write
\begin{align*}
\langle \nabla\times\nabla_h\phi, \vw \rangle &= (\nabla_h\phi, \nabla\times\vw)_{\Omega} \\
&= \sum_{T\in\Th} \left[ (\nabla\times\nabla\phi,\vw)_T + (\nabla\phi,\vn_T\times\vw)_{\partial T} \right] \\
&= \sum_{T\in\Th} \left[ (\vzero,\vw)_T + (\nabla\phi,\vn_T\times\vw)_{\partial T\setminus\partial\Omega} \right] \\
&= \sum_{T\in\Th} - (\vn_T\times\nabla\phi,\vw)_{\partial T\setminus\partial\Omega} \\
&= \sum_{f\in\Fhin} (-\jut{ \nabla\phi},\vw)_f.
\end{align*}
Therefore, we need to find a $\phi\in P^{-1}_{k'}(\Th)$ such that
\begin{align*}
-\jut{ \nabla\phi}|_f &= \vhjdel_f &&\forall f\in\Fhin.
\end{align*}
To do this, we define, for each internal face $f\in\Fhin$, the scalar jump $\ju{\phi}_f:= (\phi^+ - \phi^-)|_{f}$ with $\phi^{\pm}:=\phi|_{T^{\pm}}$, two orthogonal unit tangent vectors $\vht_1$ and $\vht_2$ such that $\vht_1\times\vht_2=\vn_f:=\vn^+|_f$, differential operators $\partial_{t_i}:=\vht_i\cdot\nabla$, and the gradient operator restricted to the face: $\nabla_f := \vht_1\partial_{t_1} + \vht_2\partial_{t_2}$. We can then write
\begin{align*}
-\jut{ \nabla\phi}|_f &= -(\vn^+\times\nabla\phi^+ + \vn^-\times\nabla\phi^-)|_{f} \\
&=  -(\vn_f\times\nabla\phi^+ - \vn_f\times\nabla\phi^-)|_{f}  \\
&=  -(\vn_f\times\nabla_f\phi^+ - \vn_f\times\nabla_f\phi^-)|_{f}  \\
&= -\vn_f\times\nabla_f\ju{\phi}_f 
\end{align*}
for all $f\in\Fhin$. We therefore introduce an auxiliary variable $\lambda_f\in P_{k'}(f)$ and solve
\begin{subequations}
\label{eq:lambda1}
\begin{align}
-\vn_f\times\nabla_f\lambda_f &= \vhjdel_f, \label{eq:lambda1a} \\
(\lambda_f,1)_f&=0,  
\label{eq:lambda1b}
\end{align}
\end{subequations}
for each $f\in\Fhin$, where \eqref{eq:lambda1b} is only added to ensure a unique solution. In the next section, we will show the existence of and how to construct a $\phi\in P^{-1}_{k'}(\Th)$ such that $\ju{\phi}_f=\lambda_f$ for all $f\in\Fhin$. Now, we will prove that problem~\eqref{eq:lambda1} uniquely defines $\lambda_f$. We start by showing that~\eqref{eq:lambda1} corresponds to a 2D curl problem on a face. To see this, note that $\vn_f\times\nabla_f = \vht_2\partial_{t,1} - \vht_1\partial_{t,2}$. If we take the inner product of (\ref{eq:lambda1a}) with $\vht_1$ and $\vht_2$, we obtain
\begin{subequations}
\label{eq:lambda2}
\begin{align}
\partial_{t_2}\lambda_f &=  \jhdel_{f,t_1}, \label{eq:lambda2a} \\
-\partial_{t_1}\lambda_f&= \jhdel_{f,t_2}, \label{eq:lambda2b}
\end{align}
\end{subequations}
where $\jhdel_{f,t_i}:=\vht_i\cdot\vhjdel_f$, which is equivalent to a
2D curl problem on $f$. To show that~\eqref{eq:lambda1} is well-posed,
we use the discrete exact sequence in 2D:
\begin{align*}
\mathbb{R}\xlongrightarrow{\text{$\subset\ $}} P_{k'}(f)
\xlongrightarrow{\text{$\curl_f$}} \RT_{k'}(f)
\xlongrightarrow{\text{$\nabla\cdot\ $}} P_{k'-1}(f),
\end{align*}%
where $\curl_f:=(\partial_{t_2},-\partial_{t_1})$. Since $\vhjdel_f\in D_{k'}(f)$, it suffices to show that $\partial_{t_1}\jhdel_{f,t_1} + \partial_{t_2}\jhdel_{f,t_2}= \nabla_f\cdot \vhjdel_f = 0$. To prove this, we use that, for every $\psi\in \cC^\infty_0(\Omega)^3$,
\begin{align*}
\langle \vhjdel, \nabla\psi \rangle &= \langle \vjdel -\nabla\times\vhHdel, \nabla\psi \rangle \\
&= \langle \nabla\times(\vH-\vH_h-\vhHdel), \nabla\psi \rangle \\
&= (\vH-\vH_h-\vhHdel,\nabla\times\nabla\psi)_{\Omega} \\
&= (\vH-\vH_h-\vhHdel,\vzero)_{\Omega} \\
&= 0.
\end{align*}
Then, for every $\psi\in \mathcal{C}_0^{\infty}(\Omega)$, we can write
\begin{align*}
0&= \langle \vhjdel, \nabla\psi \rangle \\
&\stackrel{\eqref{eq:vhjdel}}{=} \sum_{f\in\Fhin} (\vhjdel_f,\nabla\psi)_f \\
&= \sum_{f\in\Fhin} (\vhjdel_f,\nabla_f\psi)_f  \\
&= \sum_{f\in\Fhin} \left[ (\vn_{\partial f}\cdot\vhjdel_f,\psi)_{\partial f} - (\nabla_f\cdot\vhjdel_f,\psi)_f \right]  \\
&= \sum_{f\in\Fhin} \left[ (\vn_{\partial f}\cdot\vhjdel_f,\psi)_{\partial f\setminus\partial\Omega} - (\nabla_f\cdot\vhjdel_f,\psi)_f \right]  \\
&= \sum_{f\in\Fhin} \sum_{e:e\subset\partial f\setminus\partial\Omega}  (\vn_{e,f}\cdot\vhjdel_f,\psi)_{e} - \sum_{f\in\Fhin} (\nabla_f\cdot\vhjdel_f,\psi)_f  \\
&=  \sum_{e\in\Ehin} \sum_{f:\,{\partial f}
\supset e} (\vn_{e,f}\cdot\vhjdel_f,\psi)_{e}  - \sum_{f\in\Fhin} (\nabla_f\cdot\vhjdel_f,\psi)_f \\
&=  \sum_{e\in\Ehin}  \left(\sum_{f:\,{\partial f}
\supset e} \vn_{e,f}\cdot\vhjdel_f,\psi\right)_{e}  - \sum_{f\in\Fhin} (\nabla_f\cdot\vhjdel_f,\psi)_f,
\end{align*}
where $\Ehin$ denotes the set of all internal edges, $\vn_{\partial  f}$ denotes the normal unit vector of $\partial f$ that lies in the same plane as $f$ and points outward of $f$, and $\vn_{e,f}:=\vn_{\partial f}|_e$. This implies that 
\begin{subequations}
\label{eq:prop_vhjdel1}
\begin{align}
\sum_{f:\partial f\supset e} \vn_{e,f}\cdot\vhjdel_f |_e &= 0 &&\forall e\in\Ehin,  \label{eq:prop_vhjdel1a} \\
\nabla_f\cdot\vhjdel_f &= 0 &&\forall f\in\Fhin,
\end{align}
\end{subequations}
so $\nabla_f\cdot\vhjdel_f=0$ for each internal face and, therefore, problem (\ref{eq:lambda1}) is well-defined and has a unique solution.

\subsubsection{Reconstruction of $\phi$ from its jumps on the internal faces}
\label{sec:phi}

After computing $\lambda_f$ for all internal faces, it remains to compute $\phi$ such that $\ju{\phi}_f=\lambda_f$ for all $f\in\Fhin$. To do this, we use standard Lagrangian basis functions. For each element $T$, let $\mathcal{Q}_T$ denote the set of nodes on element $\overline{T}$ for $P_{k'}(T)$. The barycentric coordinates of these nodes are given by \{$(\frac{i_1}{k'},\frac{i_2}{k'},\frac{i_3}{k'},\frac{i_4}{k'})\}_{i_1,i_2,i_3,i_4\geq 0, i_1+i_2+i_3+i_4=k'}$. Also, let $\Qh$ be the union of all element nodes. We then define the degrees of freedom for $\phi$, denoted by $\{\phi_{T,\vx}\}_{T\in\Th,\vx\in \mathcal{Q}_T}$, as the values of $\phi|_T$ at the nodes $\vx\in\mathcal{Q}_T$. Since, for each $f\in\Fhin$, the space $P_{k'}(f)$ is unisolvent on the nodes $\Qh\cap\overline{f}$ , we have that $\ju{\phi}_f=\lambda_f$ for all $f\in\Fhin$ if and only if
\begin{align*}
\phi_{T^+,\vx} - \phi_{T^-,\vx} &= \lambda_{f}(\vx) &&\forall f\in\Fhin, \, {\forall}\vx\in\Qh\cap\overline{f}.
\end{align*}
We can decouple this global problem into very local problems. In particular, for each node $\vx\in\Qh$, we can compute the small set of degrees of freedom $\{\phi_{T,\vx}\}_{T:\overline{T}\ni\vx}$ by solving
\begin{subequations}
\label{eq:phi1}
\begin{align}
\phi_{T^+,\vx} - \phi_{T^-,\vx} &= \lambda_{f}(\vx) &&\forall f\in\Fhin:\overline{f}\ni\vx, \label{eq:phi1a}\\
\sum_{T:\overline{T}\ni\vx} \phi_{T,\vx} &= 0. &&\label{eq:phi1b}
\end{align}
\end{subequations}
In this local problem, each degree of freedom corresponds to an element adjacent to $\vx$ and for any two degrees of freedom corresponding to two adjacent elements, the difference should be equal to $\lambda_f(\vx)$, with $f$ the face connecting the two elements. Condition~\eqref{eq:phi1b} is only added in order to ensure a unique local solution.

For a node $\vx$ in the interior of an element, there is only one overlapping element $T$ and the above results in $\phi_{T,\vx}=0$. The same applies to a node in the interior of a boundary face. For a node $\vx$ in the interior of an internal face $f$, there are only two adjacent elements $T^+$ and $T^-$ and the above results in $\phi_{T^+,\vx}=\frac12\lambda_f(\vx)$ and $\phi_{T^-,\vx}=-\frac12\lambda_f(\vx)$. For a node in the interior of an edge, the degrees of freedom correspond to the ring (for internal edges) or the partial ring (for boundary edges) of elements adjacent to that edge. Finally, for a node on a vertex, the degrees of freedom correspond to the cloud of elements adjacent to that vertex. 

For every cycle through elements adjacent to a node $\vx$, the corresponding differences should add up to zero. A cycle means a sequence of elements $T_1\rightarrow T_2\rightarrow \cdots\rightarrow T_{n+1}$ with $n\geq 3$, such that $T_1,T_2,\dots,T_n$ are all different from each other, $T_{n+1}=T_{1}$, and two consecutive elements are connected through a face. For a node in the interior of an internal edge, there is only one possible cycle, which is the cycle through the ring of elements adjacent to that edge. For a node on a vertex, the minimal cycles are the cycles around the internal edges adjacent to that vertex. Nodes in the interior of a face or in the interior of an element only have one or two adjacent elements and therefore have no cycles in their element patches. 

Therefore, in general, the minimal cycles for any node $\vx$ are the cycles around each internal edge connected to $\vx$. To prove that the overdetermined system~(\ref{eq:phi1}) is well-posed, it is therefore sufficient to check if, for each internal edge, the differences corresponding to the cycle around that edge sum to zero.

We can write this condition more formally. Let $e\in\Ehin$ be an internal edge, let $\vht_e$ be a tangent unit vector of $e$, and let $T_1\rightarrow T_2\rightarrow \cdots\rightarrow T_{n+1}$ be a cycle rotating counter-clockwise when looking towards $\vht_e$. This means that the normal unit vector of the face $T_i\cap T_{i+1}$ pointing out of $T_{i+1}$ is given by $\vn_{T_{i+1}}|_f=\vht_e\times \vn_{e,f}=:\vn_{f,e}$ (recall that $\vn_{e,f}=\vn_{\partial f}|_e$). Also, let $\psi\in P^{-1}_{k'}(\Th)$. The sum of the differences of $\psi$ for this cycle can be written as
\begin{align*}
\sum_{i=1}^n (\psi_{T_{i+1}}-\psi_{T_i})|_e = (\psi_{T_{n+1}}-\psi_{T_1})|_e = (\psi_{T_{1}}-\psi_{T_1})|_e=  0.
\end{align*}
Now, let $f=T_i\cap T_{i+1}$ and note that 
\begin{align*}
(\psi_{T_{i+1}}-\psi_{T_i})|_f &= \begin{cases}
\ju{\psi}_f ,& \text{if\ } \vn_f=\vn_{T_{i+1}}|_f, \\
-\ju{\psi}_f ,& \text{if\ } \vn_f=-\vn_{T_{i+1}}|_f.
\end{cases}
\end{align*}
Therefore, we can write 
\begin{align*}
(\psi_{T_{i+1}}-\psi_{T_i})|_f &= \big(\vn_f\cdot(\vn_{T_{i+1}}|_f)\big)\ju{\psi}_f = (\vn_f\cdot\vn_{f,e})\ju{\psi}_f.
\end{align*}
The sum of the differences of the cycle around $e$ can therefore be rewritten as
\begin{align*}
0=\sum_{i=1}^n (\psi_{T_{i+1}}-\psi_{T_i})|_e = \sum_{f:\partial f\supset e} (\vn_f\cdot\vn_{f,e})\ju{\psi}_f|_e
\end{align*}
and from this, we obtain the conditions
\begin{align}
\label{eq:cond_lambda1}
r_e:=\sum_{f:\partial f\supset e} (\vn_{f}\cdot\vn_{f,e})\lambda_{f}|_e &= 0 &&\forall e\in\Ehin.
\end{align}
Problem (\ref{eq:phi1}) is therefore well-posed provided that (\ref{eq:cond_lambda1}) is satisfied.

\begin{figure}[h]
\includegraphics[width=0.5\textwidth]{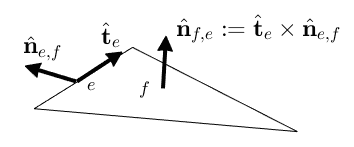}
\caption{Illustration of $\vht_e$, $\vn_{e,f}$, and $\vn_{f,e}$. The vector $\vn_{f}$ either equals $\vn_{f,e}$ or $-\vn_{f,e}$.}
\label{fig:triangle}
\end{figure}

To prove (\ref{eq:cond_lambda1}), we first prove that, for each $e\in\Ehin$, $r_e$ is constant and then prove that, for each $e\in\Ehin$, $(r_e,1)_e=0$.

Let $e$ be an edge and let $f$ be an adjacent face, and consider (\ref{eq:lambda2}) with $\vht_1=\vht_e$ and $\vht_2=(\vn_f\cdot\vn_{f,e})\vn_{e,f}$. An illustration of $\vht_e$, $\vn_{e,f}$, and $\vn_{f,e}$ is given in Figure \ref{fig:triangle}. The condition $\vn_f=\vht_1\times\vht_2$ is still satisfied, since either $\vn_f=\vn_{f,e}$ or $\vn_f=-\vn_{f,e}$ and so
\begin{align*}
\vht_1\times\vht_2 = \vht_e\times\vn_{e,f}(\vn_f\cdot\vn_{f,e}) = \vn_{f,e}(\vn_f\cdot\vn_{f,e})= \vn_f.
\end{align*}
It then follows from (\ref{eq:lambda2b}) that
\begin{align*}
-\partial_{t_e}\lambda_f|_e = -\partial_{t_1}\lambda_f|_e = \jhdel_{f,t_2}|_e = (\vn_f\cdot\vn_{f,e})\vn_{e,f}\cdot\vhjdel_f|_e,
\end{align*}
where $\partial_{t_e}:=\vht_e\cdot\nabla$. Multiplying the above by $-(\vn_f\cdot\vn_{f,e})$ and summing over all faces adjacent to $e$ results in
\begin{align*}
\sum_{f:\partial f\supset e} (\vn_f\cdot\vn_{f,e}) \partial_{t_e} \lambda_{f}|_e &= -\sum_{f:\partial f\supset e} (\vn_f\cdot\vn_{f,e})^2\vn_{e,f}\cdot\vhjdel_f|_e \\
&= -\sum_{f:\partial f\supset e} \vn_{e,f}\cdot\vhjdel_f|_e \\
&= 0,
\end{align*}
where the last line follows from (\ref{eq:prop_vhjdel1a}). We thus have
\begin{align}
\label{eq:prop_lambda1}
\partial_{t_e}r_e &= \sum_{f:\partial f\supset e} (\vn_f\cdot\vn_{f,e}) \partial_{t_e} \lambda_{f}|_e =0 &&\forall e\in\Ehin.
\end{align}
This implies that $r_e$ is a constant. To prove (\ref{eq:cond_lambda1}), it therefore remains to show that $(r_e,1)_e=0$ for all internal edges. 

To prove this, we define $\vtht_e$ to be the lowest-order N\'ed\'elec basis function corresponding to an internal edge $e$ and scaled such that $(\vht_e\cdot\vtht_e)|_e =1$ and derive
\begin{align*}
\langle \vhjdel, \vtht_e\rangle &= \langle \vjdel - \nabla\times\vhHdel, \vtht_e\rangle \\
&= \langle\nabla\times(\vH-\vH_h-\vhHdel), \vtht_e\rangle_{\Omega}  \\
&= (\vH-\vH_h-\vhHdel,\nabla\times\vtht_e)_{\Omega}  \\
&= (\mu^{-1}\nabla\times\vu, \nabla\times\vtht_e)_{\Omega} - (\mu^{-1}\nabla\times\vu_h, \nabla\times\vtht_e)_{\Omega}  - (\vhHdel,\nabla\times\vtht_e)_{\Omega} \\
&\stackrel{\eqref{eq:vhHdel1b}}{=} (\vj,\nabla\times\vtht_e)_{\Omega} - (\vj,\nabla\times\vtht_e)_{\Omega} - 0 \\
&= 0,
\end{align*}
where the first two terms in the fifth line follow from the definition of $\vu$ and $\vu_h$ and the last term in the fifth line follows from (\ref{eq:vhHdel1b}), assumption A1, and the fact that $\nabla\times\vtht_e$ is piecewise constant. We can then derive
\begin{align*}
0 =\langle \vhjdel, \vtht_e\rangle &\stackrel{\eqref{eq:vhjdel}}{=} \sum_{f\in\Fhin} (\vhjdel_f,\vtht_e)_f \\
&= \sum_{f:\partial f\supset e} (\vhjdel_f,\vtht_e)_f \\
& \stackrel{\eqref{eq:lambda1a}}{=} \sum_{f:\partial f\supset e} (-\vn_f\times\nabla_f\lambda_f,\vtht_e)_f \\
&= \sum_{f:\partial f\supset e} (\nabla_f\lambda_f,\vn_f\times\vtht_e)_f \\
&= \sum_{f:\partial f\supset e} \left[ (\vn_{\partial f}\lambda_f,\vn_f\times\vtht_e)_{\partial f} - \big(\lambda_f,\nabla_f\cdot(\vn_f\times\vtht_e)\big)_f \right] \\
&= \sum_{f:\partial f\supset e} \left[ (-\vn_f\times\vn_{\partial f}\lambda_f,\vtht_e)_{\partial f} + (\lambda_f,\vn_f\cdot(\nabla_f\times\vtht_e))_f \right] \\
& \stackrel{\eqref{eq:lambda1b}}{=} \sum_{f:\partial f\supset e}(-\vn_f\times\vn_{\partial f}\lambda_f,\vtht_e)_{\partial f} + 0, 
\end{align*}
where the second line follows from the fact that the tangent components of $\vtht_e$ are zero on all faces that are not adjacent to $e$, and the last line follows from (\ref{eq:lambda1b}) and the fact that $\nabla_f\times\vtht_e$ is constant on $f$. We continue to obtain
\begin{align*}
0 &= \sum_{f:\partial f\supset e}(-\vn_f\times\vn_{\partial f}\lambda_f,\vtht_e)_{\partial f} \\
&= \sum_{f:\partial f\supset e}  \sum_{\tilde{e}:\tilde{e}\subset\partial{f}} (-\vn_f\times\vn_{\tilde{e},f} \lambda_f,\vtht_e)_{\tilde{e}} \\
&= \sum_{f:\partial f\supset e}  \sum_{\tilde{e}:\tilde{e}\subset\partial{f}} \big(-(\vn_f\cdot\vn_{f,\tilde e})\vn_{f,\tilde e}\times\vn_{\tilde e,f} \lambda_f,\vtht_e \big)_{\tilde{e}} \\
&= \sum_{f:\partial f\supset e}  \sum_{\tilde{e}:\tilde{e}\subset\partial{f}} \big(-(\vn_f\cdot\vn_{f,\tilde e})(\vht_{\tilde e}\times\vn_{\tilde e,f})\times\vn_{\tilde e,f} \lambda_f,\vtht_e \big)_{\tilde{e}} \\
&= \sum_{f:\partial f\supset e}\sum_{\tilde{e}:\tilde{e}\subset\partial{f}} \big( (\vn_f\cdot\vn_{f,\tilde e}) \vht_{\tilde{e}}\lambda_f,\vtht_e \big)_{\tilde{e}}  \\
&=\sum_{f:\partial f\supset e}\sum_{\tilde{e}:\tilde{e}\subset\partial{f}} \big((\vn_f\cdot\vn_{f,\tilde e})\lambda_f,\vht_{\tilde{e}}\cdot\vtht_e \big)_{\tilde{e}} \\
&=\sum_{f:\partial f\supset e} \big((\vn_f\cdot\vn_{f,e})\lambda_f,1 \big)_{e},
\end{align*}
where the third line follows from the fact that either $\vn_f=\vn_{f,\tilde e}$ or $\vn_f=-\vn_{f,\tilde e}$, the fifth line follows from the property $(\vct{a}\times\vct{b})\times\vct{c}=-\vct{a}(\vct{b}\cdot\vct{c})+\vct{b}(\vct{a}\cdot\vct{c})$ and the fact that $\vht_{\tilde e}$ and $\vn_{\tilde e,f}$ are orthogonal, and the last line follows from the fact that $(\vht_{\tilde e}\cdot\vtht_e)|_{\tilde e}=0$ for all edges $\tilde e\neq e$ and from $(\vht_e\cdot\vtht_e)|_e=1$. We then continue to obtain
\begin{align*}
0 &= \sum_{f:\partial f\supset e} \big( (\vn_f\cdot\vn_{f,e}) \lambda_f,1 \big)_{e} \\
&= \left(\sum_{f:\partial f\supset e} (\vn_f\cdot\vn_{f,e}) \lambda_f, 1 \right)_{e} \\
&= (r_e,1)_{e}.
\end{align*}
Therefore, $(r_e,1)_{e}=0$ and hence $r_e=0$ for all internal edges and so problem (\ref{eq:phi1}) is well-posed.

\subsection{Summary of computing the error estimator}
\label{sec:errEstOv}
We fix $k'\geq k$ and compute our error estimator in four steps.

\emph{Step 1.} We compute $\vhHdel\in \Nd^{-1}_{k'}(\Th)$ from the datum $\vj$ and the numerical solution~$\vH_h$ by solving
\begin{subequations}
\label{eq:vhHdel3}
\begin{align}
\nabla\times\vhHdel |_T &= \vjdel_T = \vj|_T-\nabla\times\vH_h|_T, && \\
(\mu\vhHdel,\nabla\psi)_T &= 0 &&\forall \psi\in P_{k'}(T), \label{eq:vhHdel3b}
\end{align}
\end{subequations}
for each $T\in\Th$. 

\emph{Step 2.} For each internal face $f\in\Fhin$, let $T^+$ and $T^-$ denote the two adjacent elements, let $\vn^{\pm}$ denote the normal unit vector pointing outward of $T^{\pm}$, let $\vH^{\pm}:=\vH|_{T^{\pm}}$ denote the vector field restricted to $T^{\pm}$, let $\jut{\vH}|_f := (\vn^+\times\vH^+ + \vn^-\times\vH^-)|_{f}$ denote the tangential jump operator, and let $\nabla_f$ denote the gradient operator restricted to face $f$. We set $\vn_f:=\vn^+|_f$ and compute $\lambda_f\in P_{k'}(f)$ by solving
\begin{subequations}
\label{eq:lambda3}
\begin{align}
-\vn_f\times\nabla_f\lambda_f &= \vhjdel_f = \vjdel_f+\jut{\vhHdel}|_f =  \jut{\vH_h + \vhHdel}|_f , \\
(\lambda_f,1)_f &=0, 
\end{align}
\end{subequations}
for each internal face $f\in\Fhin$. 

\emph{Step 3.} We compute $\phi\in P^{-1}_{k'}(\Th)$ by solving, for each $\vx\in\Qh$, the small set of degrees of freedom $\{\phi_{T,\vx}\}_{T:\overline{T}\ni\vx}$ such that
\begin{subequations}
\label{eq:phi3}
\begin{align}
\phi_{T^+,\vx} - \phi_{T^-,\vx} &= \lambda_{f}(\vx) &&\forall f\in\Fhin:\partial f\ni\vx, \\
\sum_{T:\overline{T}\ni\vx} \phi_{T,\vx} &= 0, &&
\end{align}
\end{subequations}
where $\Qh$ denotes the set of standard Lagrangian nodes corresponding to the finite element space $P_{k'}(\Th)$ and $\phi_{T,\vx}$ denotes the value of $\phi|_T$ at node $\vx$. 

\emph{Step 4.} We compute the field
\begin{align*}
\vtHdel = \vhHdel + \nabla_h\phi,
\end{align*}
where $\nabla_h$ denotes the element-wise gradient operator, and compute the error estimator
\begin{align}
\label{eq:eta}
\eta_h :=\|\mu^{1/2}\vtHdel\|_{\Omega} =  \left(\sum_{T\in \Th}\eta_T^2\right)^{1/2}, &\quad\text{where}\quad
\eta_T :=\|\mu^{1/2}\vtHdel\|_T.
\end{align}

\begin{rem}
For the case $k=1$, this algorithm requires solving local problems that involve 6 unknowns per element (Step 1), 3 unknowns per face (Step 2), and $(\# T:\overline{T}\ni\nu) \approx 24$ unknowns per vertex $\nu$ (Step 3). Hence, the third step is expected to be the most computationally expensive step, although it still involves only 3 times as few unknowns as the vertex-patch problems of \cite{braess08}.
\end{rem}

\subsection{Reliability and Efficiency}
\label{sec:eff}

The results of the previous sections immediately give the following theorem.

\begin{thm}[reliability]
Let $\vu$ be the solution to (\ref{eq:WF}), let $\vu_h$ be the solution to (\ref{eq:FEM}), and set $\vH:=\mu^{-1}\nabla\times\vu$ and $\vH_h:=\mu^{-1}\nabla\times\vu_h$. Also, fix $k'\geq k$ and assume that assumptions A1 and A2 hold true. Then the problems in Steps~1--3 are all well-defined and have a unique solution. Furthermore, if $\vtHdel$ is computed by following Steps~1--4 in Section \ref{sec:errEstOv}, then
\begin{align}
\label{eq:errEst3}
\|\mu^{1/2}(\vH-\vH_h)\|_{\Omega} &\leq \|\mu^{1/2}\vtHdel \|_{\Omega}=\eta_h.
\end{align}
\end{thm}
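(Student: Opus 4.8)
The plan is to treat this as a synthesis result: essentially every ingredient has already been established in the step-by-step derivation of Section~\ref{sec:errEstDer}, so the proof reduces to collecting the three well-posedness statements and then verifying that the assembled field $\vtHdel=\vhHdel+\nabla_h\phi$ satisfies the distributional residual equilibrium condition \eqref{eq:vtHdel}, after which the bound is immediate from Corollary~\ref{cor:estimator}. Throughout I would read every identity as an identity of distributions tested against $\cC_0^{\infty}(\Omega)^3\cup\Nd_{k,0}(\Th)$, so that the jump-based meaning of $\nabla\times\nabla_h\phi$ is legitimate.

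First I would dispatch the well-posedness of Steps~1--3 by citing the corresponding subsections. For Step~1, on each $T\in\Th$ the datum $\vjdel_T=\vj|_T-\nabla\times\vH_h|_T$ lies in $\RT_{k'}(T)$ (by A1 and A2) and is divergence-free, so \eqref{eq:vhHdel} has a unique solution via the three-dimensional discrete exact sequence on $T$, exactly as in Section~\ref{sec: vhHdel}. For Step~2, the remainder representation \eqref{eq:vhjdel} shows $\vhjdel_f\in\RT_{k'}(f)$, while testing $\vhjdel$ against gradients yields $\nabla_f\cdot\vhjdel_f=0$ together with the edge conditions \eqref{eq:prop_vhjdel1a}; the divergence-free property makes the face problem \eqref{eq:lambda3} well-posed through the two-dimensional discrete exact sequence, as in Section~\ref{sec:jumpphi}. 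For Step~3, the overdetermined node systems \eqref{eq:phi3} are solvable precisely when the cyclic compatibility condition \eqref{eq:cond_lambda1}, namely $r_e=0$ for every internal edge, holds; this was verified in Section~\ref{sec:phi} by first proving $\partial_{t_e}r_e=0$ (so $r_e$ is constant) and then $(r_e,1)_e=0$ using the lowest-order N\'ed\'elec test function $\vtht_e$.

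Next I would verify the equilibrium condition for the assembled field, which lies in $L^2(\Omega)^3$ since $\vhHdel\in\Nd^{-1}_{k'}(\Th)$ and $\nabla_h\phi$ is piecewise polynomial. By Step~1, $\nabla\times\vhHdel|_T=\vjdel_T$, and by \eqref{eq:vhjdel} the remainder $\vhjdel=\vjdel-\nabla\times\vhHdel$ acts purely through the face distributions $\vhjdel_f$. The node construction gives $\ju{\phi}_f=\lambda_f$ on every internal face, so, using $-\jut{\nabla\phi}_f=-\vn_f\times\nabla_f\ju{\phi}_f$ together with \eqref{eq:lambda1a}, one finds $\nabla\times\nabla_h\phi=\vhjdel$ as a distribution. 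Adding the two contributions,
\begin{align*}
\nabla\times\vtHdel &= \nabla\times\vhHdel + \nabla\times\nabla_h\phi = (\vjdel-\vhjdel)+\vhjdel = \vjdel = \vj-\vj_h,
\end{align*}
which is exactly \eqref{eq:vtHdel}. Corollary~\ref{cor:estimator} then delivers $\|\mu^{1/2}(\vH-\vH_h)\|_{\Omega}\le\|\mu^{1/2}\vtHdel\|_{\Omega}=\eta_h$.

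The genuinely substantive ingredient is not in this assembly but already behind Step~3: the solvability of the node problems rests on the compatibility $r_e=0$, whose proof couples the face solves $\lambda_f$ around each edge ring and exploits the orthogonality $\langle\vhjdel,\vtht_e\rangle=0$ coming from \eqref{eq:vhHdel1b} and the Galerkin property of $\vu_h$. I expect that to be the main obstacle conceptually; once it is in hand, the remaining content of the theorem is bookkeeping, the only care being the consistent distributional interpretation of the curl of $\nabla_h\phi$ across internal faces so that the telescoping identity $\nabla\times\vtHdel=\vj-\vj_h$ is valid.
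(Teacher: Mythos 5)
Your proposal is correct and takes essentially the same route as the paper, which simply states that the theorem follows immediately from the results of the preceding sections (well-posedness of Steps 1--3 established in Sections~\ref{sec: vhHdel}--\ref{sec:phi}, plus Corollary~\ref{cor:estimator} applied to $\vtHdel=\vhHdel+\nabla_h\phi$). Your explicit assembly of the distributional identity $\nabla\times\vtHdel=\vj-\vj_h$ is exactly the intended argument, just written out in more detail than the paper bothers to.
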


We also prove local efficiency of the estimator and state it as the following theorem.

\begin{thm}[local efficiency]
\label{th:efficiency}
Let $\vu$ be the solution to (\ref{eq:WF}), let $\vu_h$ be the solution to (\ref{eq:FEM}), and set $\vH:=\mu^{-1}\nabla\times\vu$ and $\vH_h:=\mu^{-1}\nabla\times\vu_h$. Also, fix $k'\geq k$ and assume that assumptions A1 and A2 hold true. If $\vtHdel$ is computed by following Steps~1--4 in Section \ref{sec:errEstOv}, then
\begin{align}
\label{eq:efficiency}
\eta_T &= \|\mu^{1/2}\vtHdel\|_T \leq C\sum_{T':\overline{T'}\cap\overline{T}\neq\emptyset} \|\mu^{1/2}(\vH-\vH_h)\|_{T'}
\end{align}
for all $T\in\Th$, where $C$ is some positive constant that depends on the magnetic permeability $\mu$, the shape-regularity of the mesh and the polynomial degree $k'$, but not on the mesh width $h$.
\end{thm}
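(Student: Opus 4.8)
The plan is to split $\vtHdel=\vhHdel+\nabla_h\phi$ by the triangle inequality, $\eta_T\le\|\mu^{1/2}\vhHdel\|_T+\|\mu^{1/2}\nabla\phi\|_T$, and to control each piece by first invoking the stability of the corresponding local solver and then bounding the resulting data (the element and face residuals) by the true error through standard bubble-function arguments. Throughout, assumption A1 guarantees that $\vH_h$, and hence all residuals, are piecewise polynomial, while the bounds $\mu_0\le\mu\le\mu_1$ let me pass between $\mu$-weighted and unweighted norms at the cost of a factor entering $C$; all hidden constants depend only on $k'$ and the shape-regularity of $\Th$.

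For the element part, I would first establish stability of the local problem (\ref{eq:vhHdel}): scaling to a reference element and using surjectivity of $\curl\colon\Nd_{k'}(T)\to\{\vr\in\RT_{k'}(T):\dvg\,\vr=0\}$ from the discrete exact sequence, together with the gauge (\ref{eq:vhHdel1b}) selecting the component orthogonal to gradients, yields $\|\vhHdel\|_T\lesssim h_T\|\vjdel_T\|_T$. Since $\nabla\times\vH=\vj$ holds in $L^2(\Omega)$, the element residual satisfies $\vjdel_T=\nabla\times(\vH-\vH_h)$ on $T$, so a Verf\"urth-type bubble estimate (multiplying the polynomial $\vjdel_T$ by an interior bubble, integrating by parts, and applying an inverse inequality) gives $\|\vjdel_T\|_T\lesssim h_T^{-1}\|\vH-\vH_h\|_T$. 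Combining the two yields $\|\mu^{1/2}\vhHdel\|_T\lesssim\|\mu^{1/2}(\vH-\vH_h)\|_T$.

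For the potential part, I would work backwards through the reconstruction. The nodal solves (\ref{eq:phi1}) form a finite linear system on the element cloud of each node $\vx$, which was shown well-posed; its solution operator is bounded in terms of the valence of $\vx$, so $\sum_{T:\overline T\ni\vx}|\phi_{T,\vx}|^2\lesssim\sum_{f:\overline f\ni\vx}|\lambda_f(\vx)|^2$. The inverse inequality and nodal norm equivalence on $P_{k'}(T)$ then give $\|\nabla\phi\|_T^2\lesssim h_T^{-1}\sum_f\|\lambda_f\|_f^2$, the sum running over faces meeting $\overline T$. Stability of the $2$D face solver (\ref{eq:lambda1}), via the $2$D discrete exact sequence and a Poincar\'e inequality using the gauge (\ref{eq:lambda1b}), gives $\|\lambda_f\|_f\lesssim h_f\|\vhjdel_f\|_f$, whence $\|\nabla\phi\|_T^2\lesssim h_T\sum_f\|\vhjdel_f\|_f^2$. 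Finally, since $\vH\in H(\curl;\Omega)$ has continuous tangential trace, $\jut{\vH}|_f=\vzero$, so $\vhjdel_f=\jut{\vH_h+\vhHdel}|_f=-\jut{\vH-\vH_h}|_f+\jut{\vhHdel}|_f$; a face-bubble argument bounds $h_f^{1/2}\|\jut{\vH-\vH_h}|_f\|_f$ by $\|\vH-\vH_h\|$ on the two elements sharing $f$, while a discrete trace inequality combined with the element estimate already proved bounds $h_f^{1/2}\|\jut{\vhHdel}|_f\|_f$ likewise. This yields $h_f^{1/2}\|\vhjdel_f\|_f\lesssim\|\vH-\vH_h\|_{T^+\cup T^-}$ and hence $\|\nabla\phi\|_T\lesssim\big(\sum_f\|\vH-\vH_h\|_{T^+\cup T^-}^2\big)^{1/2}$.

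Summing the face contributions and noting that every face entering the reconstruction of $\phi|_T$ touches $\overline T$, so that all elements $T^{\pm}$ above lie in the patch $\{T':\overline{T'}\cap\overline T\neq\emptyset\}$, gives (\ref{eq:efficiency}) after reinserting the $\mu$ bounds. I expect the main obstacle to be the reconstruction-stability step: unlike the element- and face-residual efficiency estimates, which are classical, controlling $\|\nabla\phi\|_T$ requires the new observation that the nodal solves (\ref{eq:phi1}) define a uniformly bounded operator and that the chain of nodal-to-$L^2$ and $L^2$-to-face norm equivalences closes with the correct powers of $h_T$ and $h_f$; shape-regularity enters precisely here, through the bounded valence of the nodes.
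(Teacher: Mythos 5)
Your proposal is correct and follows essentially the same route as the paper: triangle inequality on $\vtHdel=\vhHdel+\nabla_h\phi$, reference-element stability of the element and face solvers ($\|\vhHdel\|_T\lesssim h_T\|\vjdel_T\|_T$, $\|\lambda_f\|_f\lesssim h_f\|\vhjdel_f\|_f$), uniform boundedness of the nodal reconstruction, and finally the classical element/face residual efficiency bounds to reach the local error. The only cosmetic difference is that you re-derive those last bounds via bubble functions, whereas the paper simply cites them from the residual-estimator analysis of Beck et al.; the content is identical.
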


\begin{proof}
In this proof, we always let $C$ denote some positive constant that does not depend on the mesh width $h$, but may depend on the magnetic permeability $\mu$, the shape-regularity of the mesh and the polynomial degree $k'$. 

\emph{1.} Fix $T\in\Th$ and let $\vhHdel_T\in\Nd_{k'}(T)$ be the solution to \eqref{eq:vhHdel3}. To obtain an upper bound for $\|\mu^{1/2}\vhHdel_T\|_T$, we consider (\ref{eq:vhHdel3}) for a reference element. Let $\hat{T}$ denote the reference tetrahedron, let $\boldsymbol{\varphi}_T:\hat{T}\rightarrow T$ denote the affine element mapping, and let $\jac_T := [\frac{\partial \boldsymbol{\varphi}_T}{\partial\hat{x}_1}\, \frac{\partial \boldsymbol{\varphi}_T}{\partial\hat{x}_2}\, \frac{\partial \boldsymbol{\varphi}_T}{\partial\hat{x}_3}]$ be the Jacobian of $\boldsymbol{\varphi}_T$. Also, let $\vhGdel_{\hat T}\in\Nd_{k'}(\hat T)$ be the solution of
\begin{subequations}
\label{eq:vhGdel1}
\begin{align}
\nabla\times\vhGdel_{\hat T} &= \vjdel_{\hat T} &&\text{in }\hat{T} 
\label{eq:vhGdel1a} \\
(\vhGdel_{\hat T},\nabla\psi)_{\hat T} &= 0 &&\forall \psi\in P_{k'}(\hat T),
\end{align}
\end{subequations}
where $\vjdel_{\hat T}$ is defined as the pull-back of $\vjdel_{T}$ through the Piola transformation, namely such that $\vjdel_T\circ \boldsymbol{\varphi}_T= \frac{1}{\mathrm{det}(\jac_T)}\jac_T \vjdel_{\hat T}$, with $\mathrm{det}(\jac_T)$ the determinant of $\jac_T$. From the discrete Friedrichs inequality, it follows that
\begin{align}
\label{eq:eff1}
\| \vhGdel_{\hat T}\|_{\hat T} \leq C \| \vjdel_{\hat T} \|_{\hat T}.
\end{align}
Furthermore, if we set $\vhGdel_T$ as the push-forward of $\vhGdel_{\hat T}$ through the covariant transformation, namely
$\vhGdel_T\circ \boldsymbol{\varphi}_T:=\jac_T^{-t} \vhGdel_{\hat T}$, 
where $\jac_T^{-t}$ is the inverse of the transpose of the Jacobian $\jac_T$, then it can be checked that
\begin{align*}
\nabla\times\vhGdel_T &= \vjdel_T &&\text{in }T.
\end{align*}
From \eqref{eq:eff1}, it also follows that
\begin{align}
\label{eq:eff2}
\|\mu^{1/2}\vhGdel_T\|_T &\leq Ch_T \|\vjdel_T\|_T,
\end{align}
where $h_T$ denotes the diameter of $T$. Now, since $\vhGdel_T,\vhHdel_T\in\Nd_{k'}(T)$ and since $\nabla\times(\vhHdel_T-\vhGdel_T)=\vjdel_T-\vjdel_T=\vct{0}$, we have that $\vhHdel_T-\vhGdel_T = \nabla\psi$ for some $\psi\in P_{k'}(T)$. From \eqref{eq:vhHdel3b} and Pythagoras's theorem, it then follows that
\begin{align*}
\| \mu^{1/2} \vhGdel_T\|_T^2 &= \| \mu^{1/2} \vhHdel_T\|_T^2 + \| \mu^{1/2} (\vhGdel_T-\vhHdel_T)\|_T^2\geq \|\mu^{1/2} \vhHdel_T\|_T
\end{align*}
and from \eqref{eq:eff2}, it then follows that
\begin{align}
\label{eq:eff3}
\| \mu^{1/2} \vhHdel_T\|_T &\leq Ch_T\|\vjdel_T\|_T.
\end{align}

\emph{2.} Fix $f\in\Fhin$ and let $\lambda_f$ be the solution to
\eqref{eq:lambda3}. In a way, similarly as for $\vhHdel_T$, we can obtain the bound
\begin{align}
\label{eq:eff5}
\|\lambda_f\|_f &\leq Ch_f\|\vhjdel_f\|_f,
\end{align}
where $h_f$ denotes the diameter of the face $f$. Since $\vhjdel_f=\vjdel_f+\jut{\vhHdel}|_f$, we can use~\eqref{eq:eff5}, the triangle inequality, the trace inequality, discrete inverse inequality, and \eqref{eq:eff3} to obtain
\begin{align}
\|\lambda_f\|_f &\leq Ch_f\left(\|\vjdel_f\|_f + \|\jut{\vhHdel}\|_f \right) \nonumber \\
&\leq Ch_f\left(\|\vjdel_f\|_f + h_{T^+}^{-1/2}\|\vhHdel_{T^+}\|_{T^+} + h_{T^-}^{-1/2}\|\vhHdel_{T^-}\|_{T^-} \right) \nonumber \\
&\leq C\left(h_f\|\vjdel_f\|_f + h_{T^+}^{3/2}\|\vjdel_{T^+}\|_{T^+} + h_{T^-}^{3/2}\|\vjdel_{T^-}\|_{T^-} \right), \label{eq:eff6}
\end{align}
where $T^+$ and $T^-$ are the two adjacent elements of $f$.

\emph{3.} Fix $\vx\in\Qh$ and let $\{\phi_{T,\vx}\}_{T:\overline{T}\ni\vx}$ be the solution to \eqref{eq:phi3}. Since $\{\phi_{T,\vx}\}_{T:\overline{T}\ni\vx}$ depends linearly on $\{\lambda_f(\vx)\}_{f:\overline{f}\ni\vx}$, and since the number of possible nodal patches, which depends on the mesh regularity, is finite, we can obtain the bound
\begin{align*}
\left(\sum_{T:\overline{T}\ni\vx} \phi_{T,\vx}^2\right)^{1/2} \leq C\left(\sum_{f:\overline{f}\ni\vx} \lambda_{f}(\vx)^2  \right)^{1/2}.
\end{align*}
Now, fix $T\in\Th$. From the above and \eqref{eq:eff6}, we can obtain
\begin{align*}
\|\phi\|_T &\leq C \sum_{f:\overline{f}\cap\overline{T}\neq\emptyset} h_f^{1/2}\|\lambda_f\|_f \\
&\leq C \sum_{f:\overline{f}\cap\overline{T}\neq\emptyset} \left[h_f^{3/2}\|\vjdel_f\|_f + h_{T^+}^{2}\|\vjdel_{T^+}\|_{T^+} + h_{T^-}^{2}\|\vjdel_{T^-}\|_{T^-} \right].
\end{align*}
Note that $h_f,h_{T'}\leq Ch_T$ for all $f:\overline{f}\cap T\neq\emptyset$ and $T':\overline{T'}\cap \overline{T}\neq\emptyset$, due to the regularity of the mesh, which is incorporated in the constant $C$. Using the above and the discrete inverse inequality, we then obtain
\begin{align}
\|\mu^{1/2}\nabla\phi\|_T &\leq Ch_T^{-1}\|\phi\|_T \nonumber \\
&\leq C\sum_{f:\overline{f}\cap\overline{T}\neq\emptyset} \left[h_f^{1/2}\|\vjdel_f\|_f + h_{T^+}\|\vjdel_{T^+}\|_{T^+} + h_{T^-}\|\vjdel_{T^-}\|_{T^-} \right]. \label{eq:eff7}
\end{align}

\emph{4.} We now use the efficiency estimate of the residual error estimator established in~\cite{beck00}. This estimate can be written as
\begin{align*}
h_T\|\vjdel_T\|_T &= h_T\|\vj - \nabla\times\vH_h \|_T \leq C\|\mu^{1/2}(\vH-\vH_h)\|_T \\
h_f^{1/2}\|\vjdel_f\|_f &= h_f^{1/2}\| \jut{\vH_h}\|_f \leq C\left(\|\mu^{1/2}(\vH-\vH_h)\|_{T^+}  + \|\mu^{1/2}(\vH-\vH_h)\|_{T^-}\right)
\end{align*}
for all $T\in\Th$, $f\in\Fhin$. Using that $\vtHdel|_T = \vhHdel_T + \nabla\phi|_T$, the triangle inequality, \eqref{eq:eff3}, \eqref{eq:eff7}, and the above, we obtain
\begin{align*}
\|\mu^{1/2}\vtHdel\|_T &\leq \|\mu^{1/2}\vhHdel\|_{T} + \|\mu^{1/2}\nabla\phi \|_T \\
&\leq Ch_T\|\vjdel\|_T + C\sum_{f:\overline{f}\cap\overline{T}\neq\emptyset} \left[h_f^{1/2}\|\vjdel_f\|_f + h_{T^+}\|\vjdel_{T^+}\|_{T^+} + h_{T^-}\|\vjdel_{T^-}\|_{T^-} \right] \\
&\leq C\!\!\!\sum_{T':\overline{T'}\cap\overline{T}\neq\emptyset} \|\mu^{1/2}(\vH-\vH_h)\|_{T'},
\end{align*}
which completes the proof.
\end{proof}

\begin{rem}
\label{rem:p-robustness}
In Theorem~\ref{th:efficiency}, we proved that the constant in the efficiency estimate is independent of the mesh resolution, but may depend on $k^\prime$, as we used discrete inverse inequalities and the efficiency result stated in~\cite{beck00}. Whether it is also independent of $k^\prime$ is an open issue.  
\end{rem}

\section{Numerical experiments}
\label{sec:numerics}
In this section, we present several numerical results for the unit cube and the L-brick domain with constant magnetic permeability $\mu=1$ for the \textit{a posteriori} estimator constructed according to Steps 1--4 in Section~\ref{sec:errEstOv}.

For efficiency of the computations, we choose the same polynomial degree for the computation of the \textit{a posteriori} error estimator as for the approximation $\vu_h$, i.e. $k'=k$. In the numerical experiments, we do not project the right hand side $\vj$ onto $\RT_k(\Th)$, but solve the local problems of Step~1 in variational form. This introduces small compatibility errors into Step~2 and Step~3, which we observe to be negligible for the computation of the error estimator, cf. also the previous discussion on assumption A2 in Section~\ref{sec:assumptions}. The orthogonality conditions of the local problems of Step~1 and Step~2 are incorporated via Lagrange multipliers and the resulting local saddle point problems are solved with a direct solver. For the computation of $\phi$ in Step~3, we solve the local overdetermined systems with the least-squares method. Since we have shown that the solutions to those discrete problems are unique, the least-squares method computes those discrete unique solutions.

Instead of solving the full discrete problem~\eqref{eq:FEM}, the
numerical approximation $\vu_h$ is computed by solving the singular
system corresponding to~\eqref{eq:FEMa} only, since the gauge
condition \eqref{eq:FEMb} does not affect the variable of interest
$\vH_h:=\mu^{-1}\nabla\times\vu$. In order to do this, we use the
preconditioned conjugate gradient algorithm in combination with a
multigrid preconditioner \cite{AFW2000,H1999}. To ensure that, in the
presence of quadrature errors, the discretised right-hand side remains
discretely divergence free, a small gradient correction is added
following \cite[Section 4.1]{creuse19a}. Our implementation of
$\Nd_k(\Th)$ is based on the hierarchical basis functions from~\cite{ZaglmayrPhD}.

We evaluate the reliability and efficiency of the \textit{a posteriori} error estimator $\eta_h$ defined in~\eqref{eq:eta} for uniform and for adaptive meshes. For adaptive mesh refinement, we employ the standard adaptive finite element algorithm. Firstly we \emph{solve} the discrete problem~\eqref{eq:FEM}, then we compute the \textit{a posteriori} error estimator to \emph{estimate} the error; based on the local values $\eta_T$ of the estimator, we \emph{mark} elements for refinement based on the bulk marking strategy \cite{Doerfler1996} with bulk parameter $\theta=0.5$, and finally we \emph{refine} the marked elements based on a bisection strategy~\cite{AMP2000}.

\subsection{Unit cube example}
\begin{figure}[t]
\includegraphics[width=0.49\textwidth]{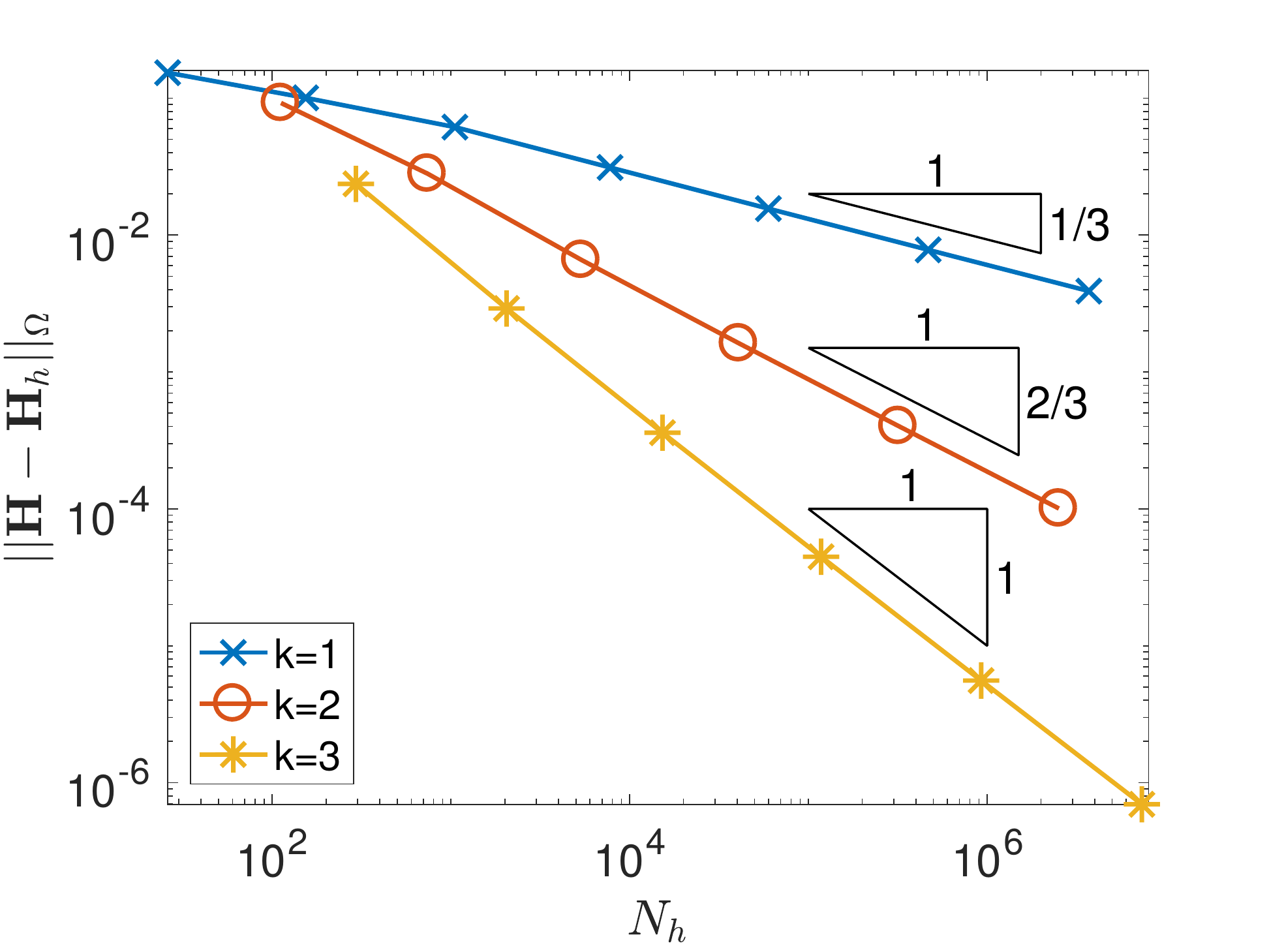}
\includegraphics[width=0.49\textwidth]{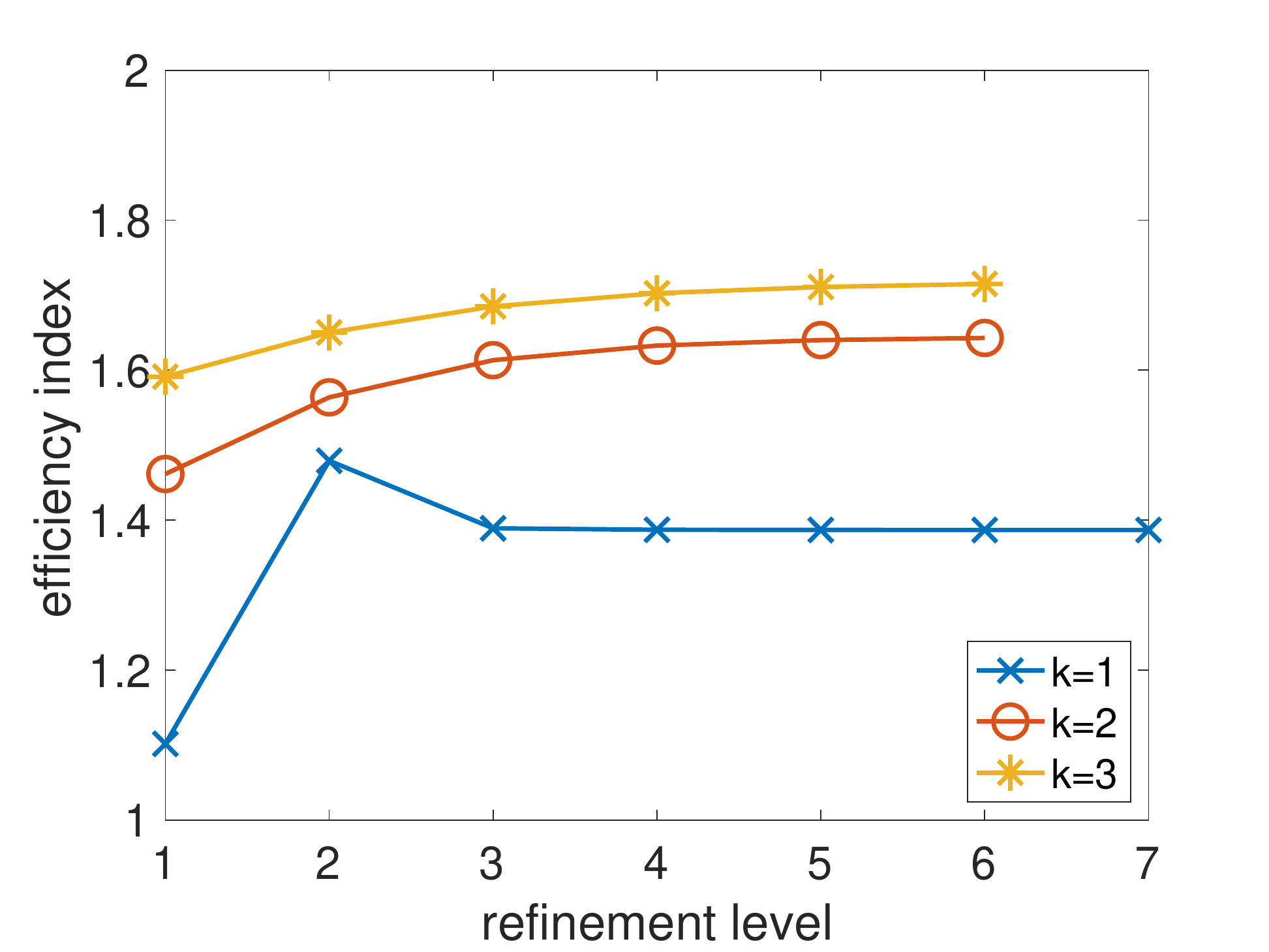}
\caption{Error and efficiency indices for the unit cube example with polynomial solution and uniformly refined meshes.}
\label{fig:unit:cube}
\end{figure}
For the first example we chose the unit cube $\Omega=(0,1)^3$ with homogeneous boundary conditions and right hand side $\vj$ according to the polynomial solution 
\begin{align*}
\vu(x,y,z) = \left( \begin{array}{c} y(1-y)z(1-z)\\ x(1-x)z(1-z) \\x(1-x)y(1-y) \end{array}\right).
\end{align*}

In Figure~\ref{fig:unit:cube}, we present the errors $\|\vH-\vH_h \|_{\Omega}$ and efficiency indices $\eta_h/\| \vH-\vH_h \|_{\Omega}$ for $k=1,2,3$, on a sequence of uniformly refined meshes. We observe that the error converges with optimal rates $\mathcal{O}(h^k) = \mathcal{O}(N_h^{-k/3})$, $N_h = \operatorname{dim}(\Nd_k(\Th))$, and that the error estimator is reliable and efficient with efficiency constants between $1$ and $2$.

Note that, for $k=3$, $\vj$ belongs to $\RT_k(\Th)$, hence in that case assumption A2 is valid. We observe that, in the other cases $k=1,2$, the estimator is reliable and efficient as well, even though $\vj$ does not belong to $\RT_k(\Th)$. Thus the error introduced in the computation of the error estimator by not satisfying A2 is negligible.

\subsection{L-brick example}
\begin{figure}[t]
\includegraphics[width=0.49\textwidth]{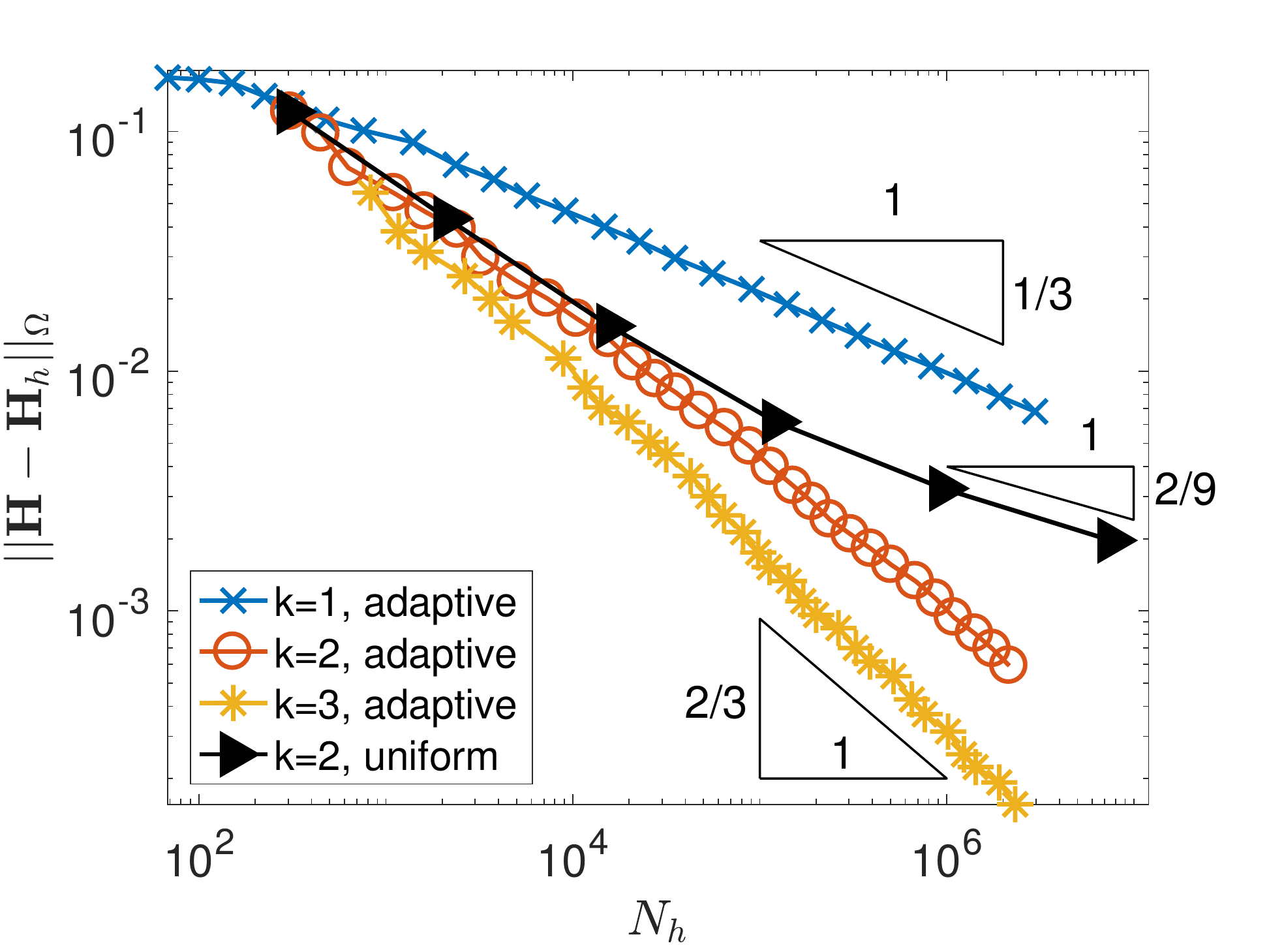}
\includegraphics[width=0.49\textwidth]{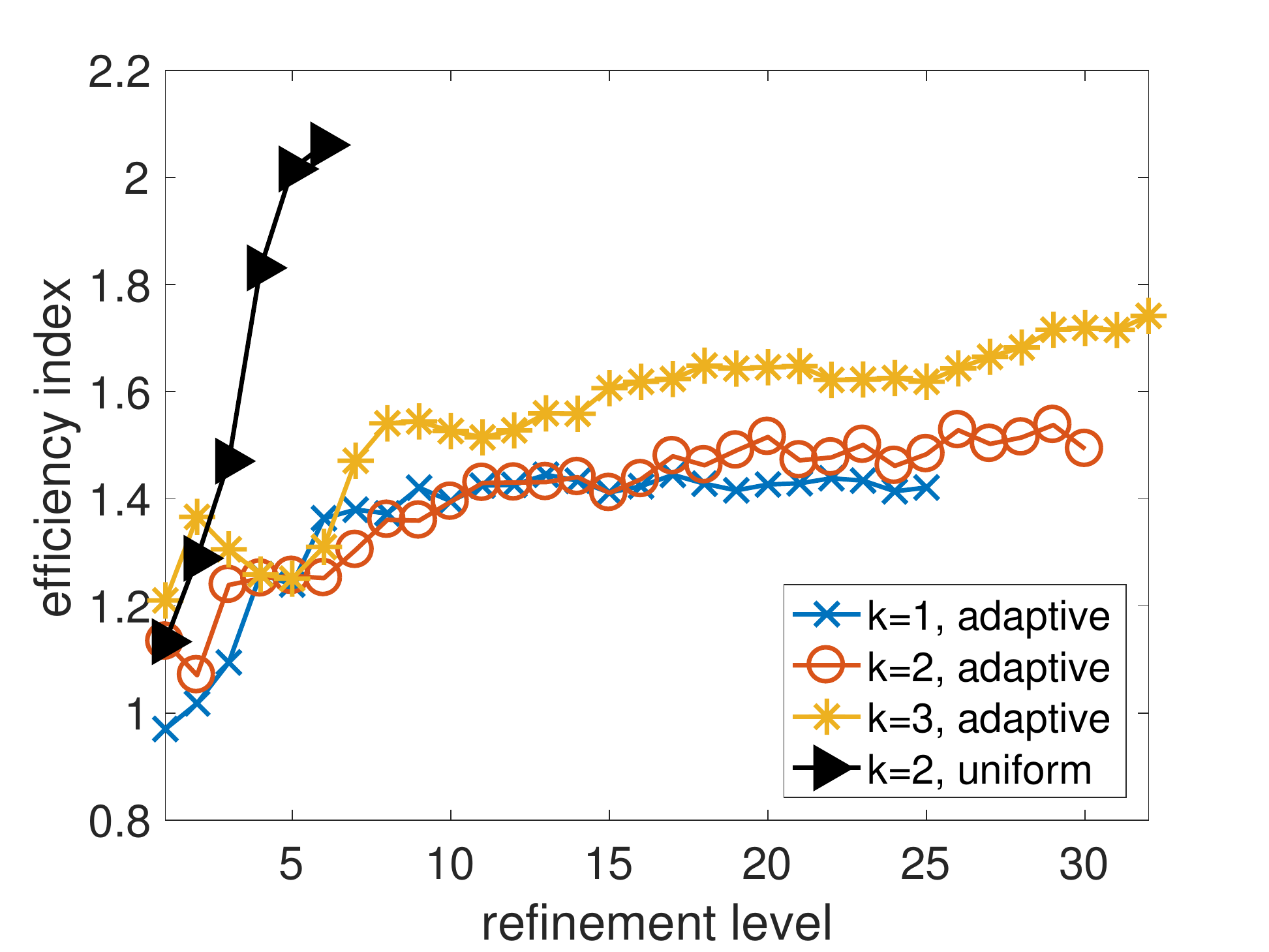}
\caption{Error and efficiency indices for adaptive mesh refinement for the L-brick example.}
\label{fig:L-brick}
\end{figure}
\begin{figure}[t]
\includegraphics[width=0.49\textwidth]{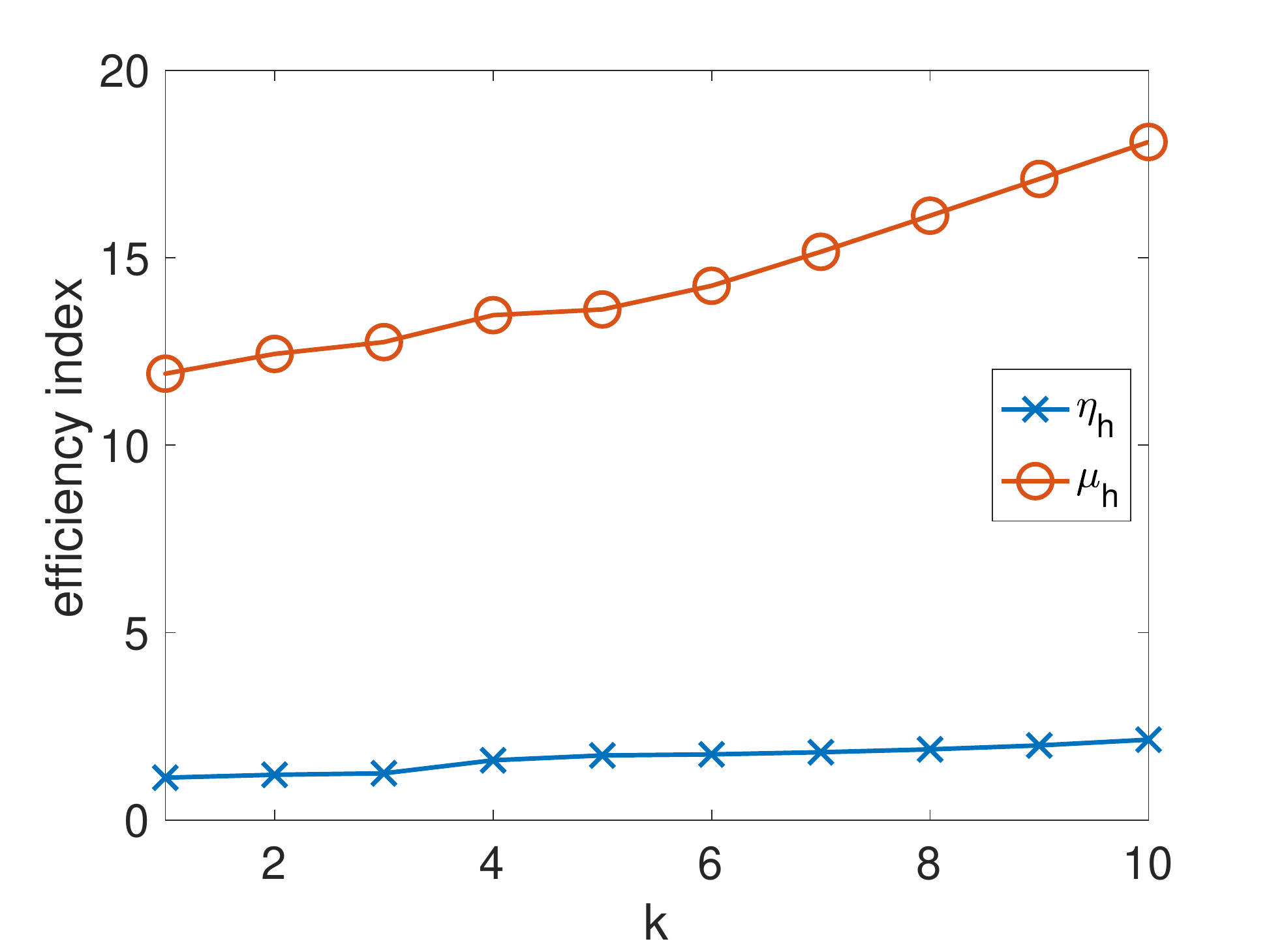}
\caption{Polynomial robustness of the equilibrated \textit{a posteriori} error estimator $\eta_h$ in comparison to the residual \textit{a posteriori} error estimator $\mu_h$
for the L-brick example.}
\label{fig:L-brick:robustness}
\end{figure}
As second example, we solve the homogeneous Maxwell problem
on the (nonconvex) domain 
\begin{align*}
\Omega = (-1,1)\times(-1,1)\times(0,1)\setminus \left(
  [0,1]\times[-1,0]\times[0,1]\right).
\end{align*}
As solution, we choose the singular function
\begin{align*}
\vu(x,y,z) = \nabla \times \left( \begin{array}{c} 0\\ 0 \\(1-x^2)^2(1-y^2)^2((1-z)z)^2r^{2/3}\cos(\frac{2}{3}\varphi) \end{array}\right),
\end{align*}
where $(r,\varphi)$ are the two dimensional polar coordinates in the $x$-$y$-plane, and we choose the right hand side $\vj$ accordingly.

Due to the edge singularity, uniform mesh refinement leads to suboptimal convergence rates of $\mathcal{O}(N_h^{-2/9})$, as shown in Figure~\ref{fig:L-brick}, left plot, for $k=2$. In contrast, adaptive mesh refinement leads to faster convergence rates. 
Note that, for this example, due to the edge singularity, anisotropic adaptive mesh refinement is needed to observe optimal convergence rates. Hence, the convergence rates in Figure~\ref{fig:L-brick} are limited by the employed isotropic adaptive mesh refinement. In Figure~\ref{fig:L-brick}, we observe the best possible rates for adaptive isotropic mesh refinement that is $\mathcal{O}(N^{-1/3})$ for $k=1$, $\mathcal{O}((N/\ln(N))^{-2/3})$ for $k=2$, and $\mathcal{O}(N^{-2/3})$ for $k\geq3$, cf. \cite[section 4.2.3]{A1999}. This shows experimentally that the adaptive algorithm generates meshes which are quasi-optimal for isotropic refinements. Again, the (global) efficiency indices are approximately between 1 and 2, as shown in Figure~\ref{fig:L-brick}, right plot.

Next, we compare the efficiency indices of $\eta_h$ for $k$-refinement to those of the residual \textit{a posteriori} error estimator \cite{beck00}
\begin{align*}
\mu_h^2 := \sum_{T\in\Th} \frac{h_T^2}{k^2} \| \vj - \nabla\times\vH_h \|_{T}^2
+ \sum_{f\in\Fhin}\frac{h_f}{k} \| \jut{ \vH_h } \|_{f}^2,
\end{align*}
with $h_T$ the diameter of element $T$ and $h_f$ the diameter of face
$f$. In Figure~\ref{fig:L-brick:robustness}, we observe the well known
fact that the residual \textit{a posteriori} error estimator $\mu_h$
is not robust in the polynomial degree $k$, whereas our new estimator
appears to be more robust in~$k$.

\subsection{Example with discontinuous permeability}
\begin{figure}[t]
\includegraphics[width=0.49\textwidth]{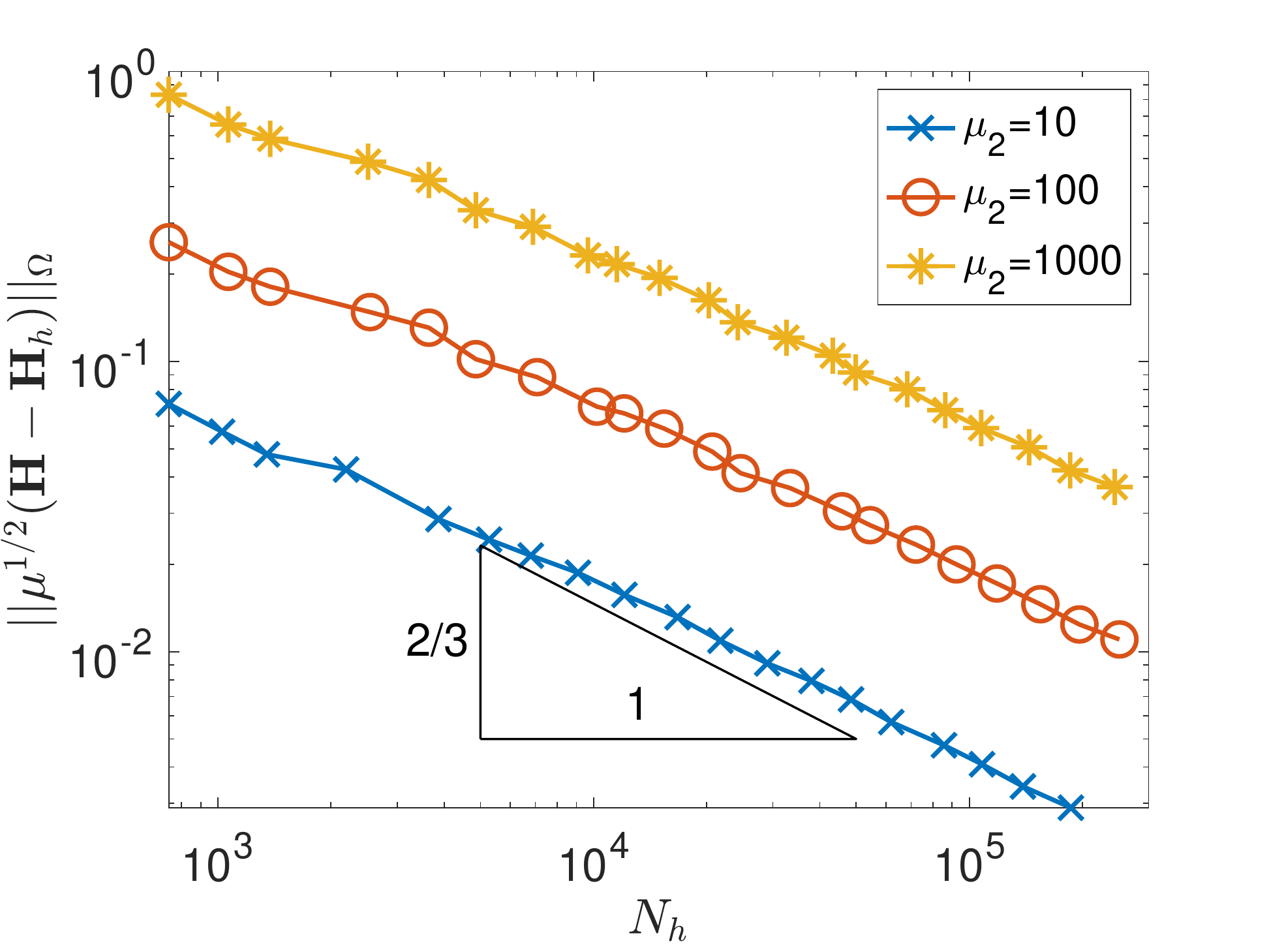}
\includegraphics[width=0.49\textwidth]{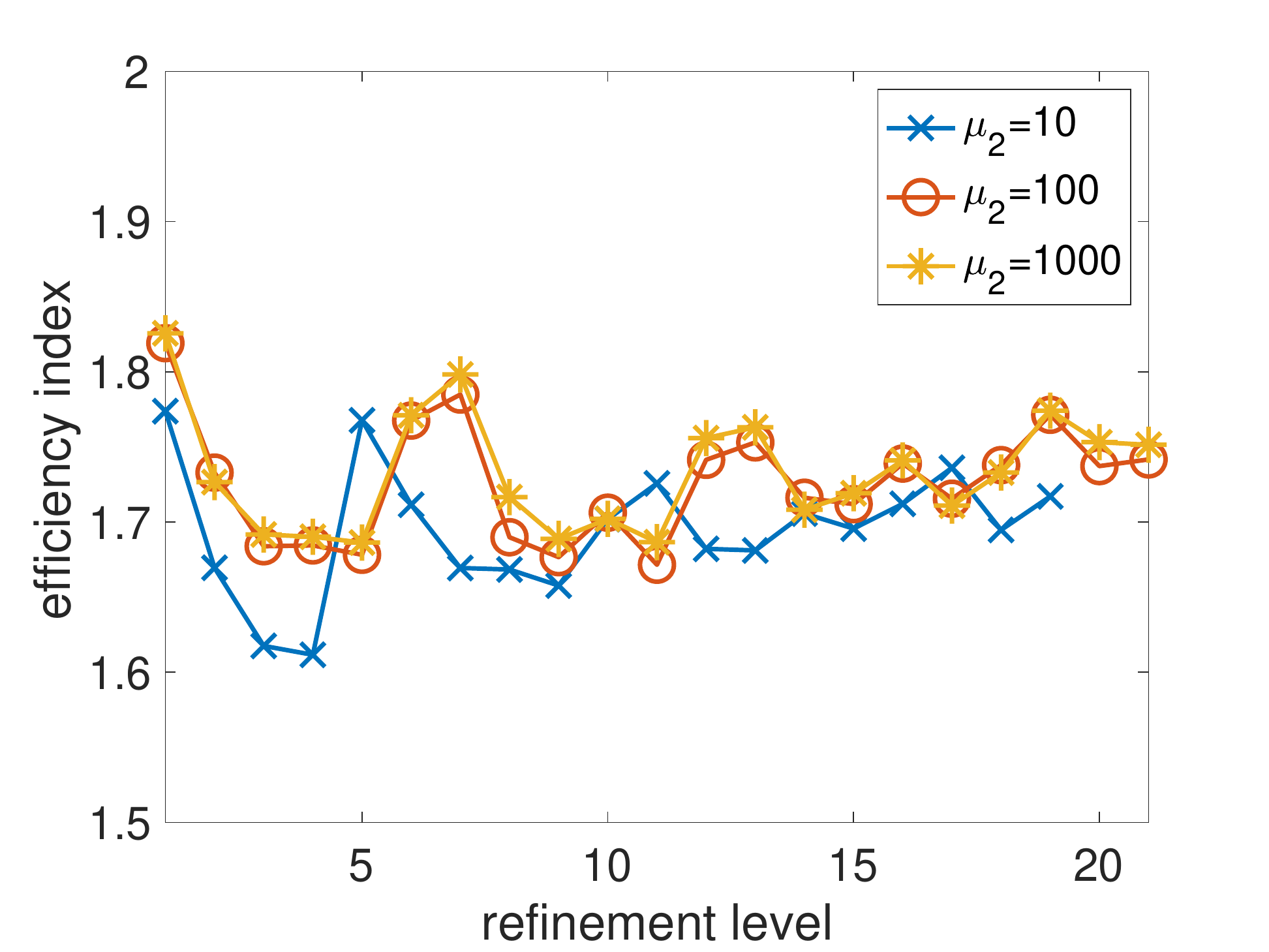}
\caption{Error and efficiency indices for adaptive mesh refinement for the example with discontinuous permeability.}
\label{fig:VarCoeff}
\end{figure}
\begin{figure}[t]
\includegraphics[width=0.49\textwidth]{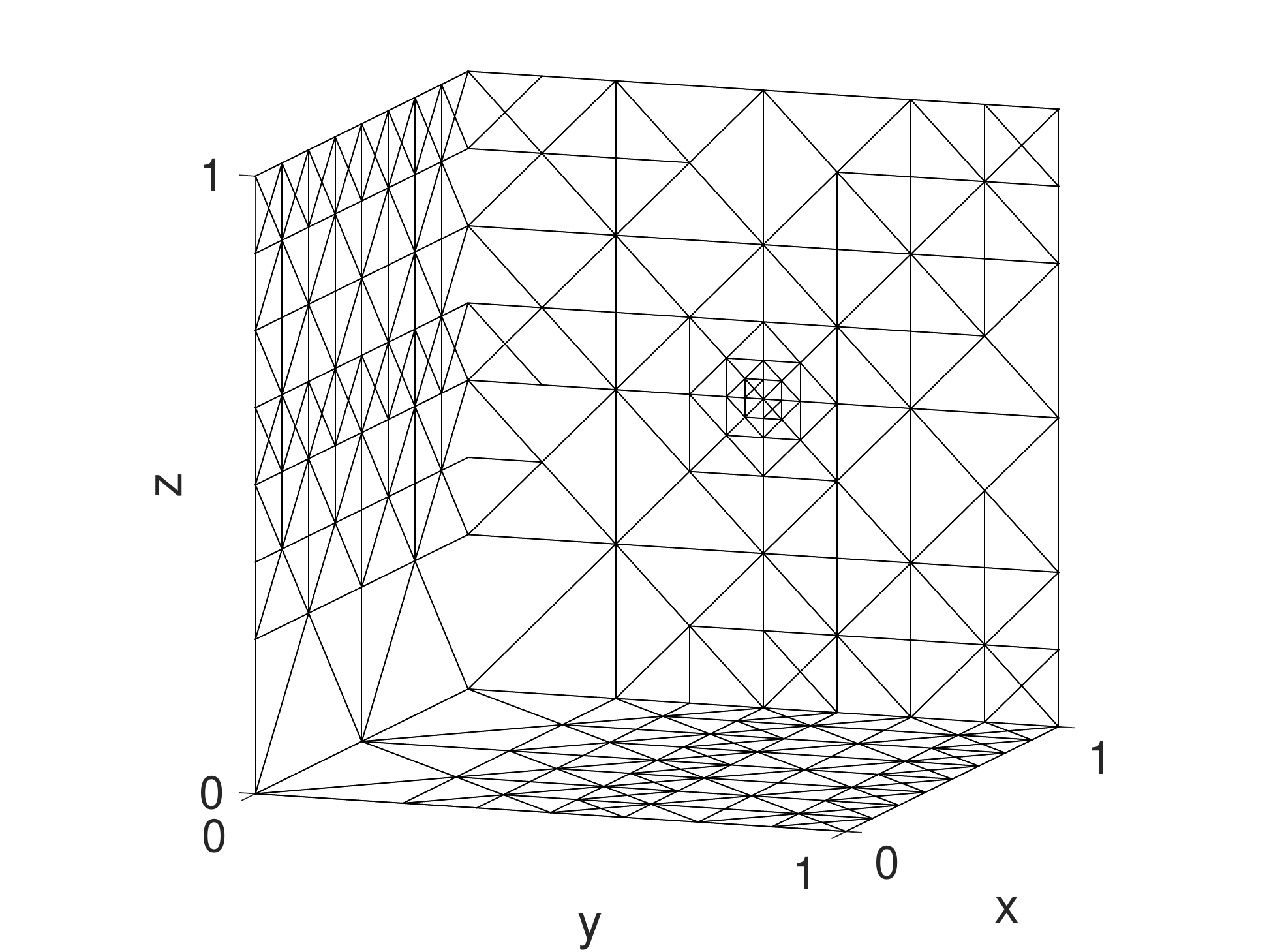}
\includegraphics[width=0.49\textwidth]{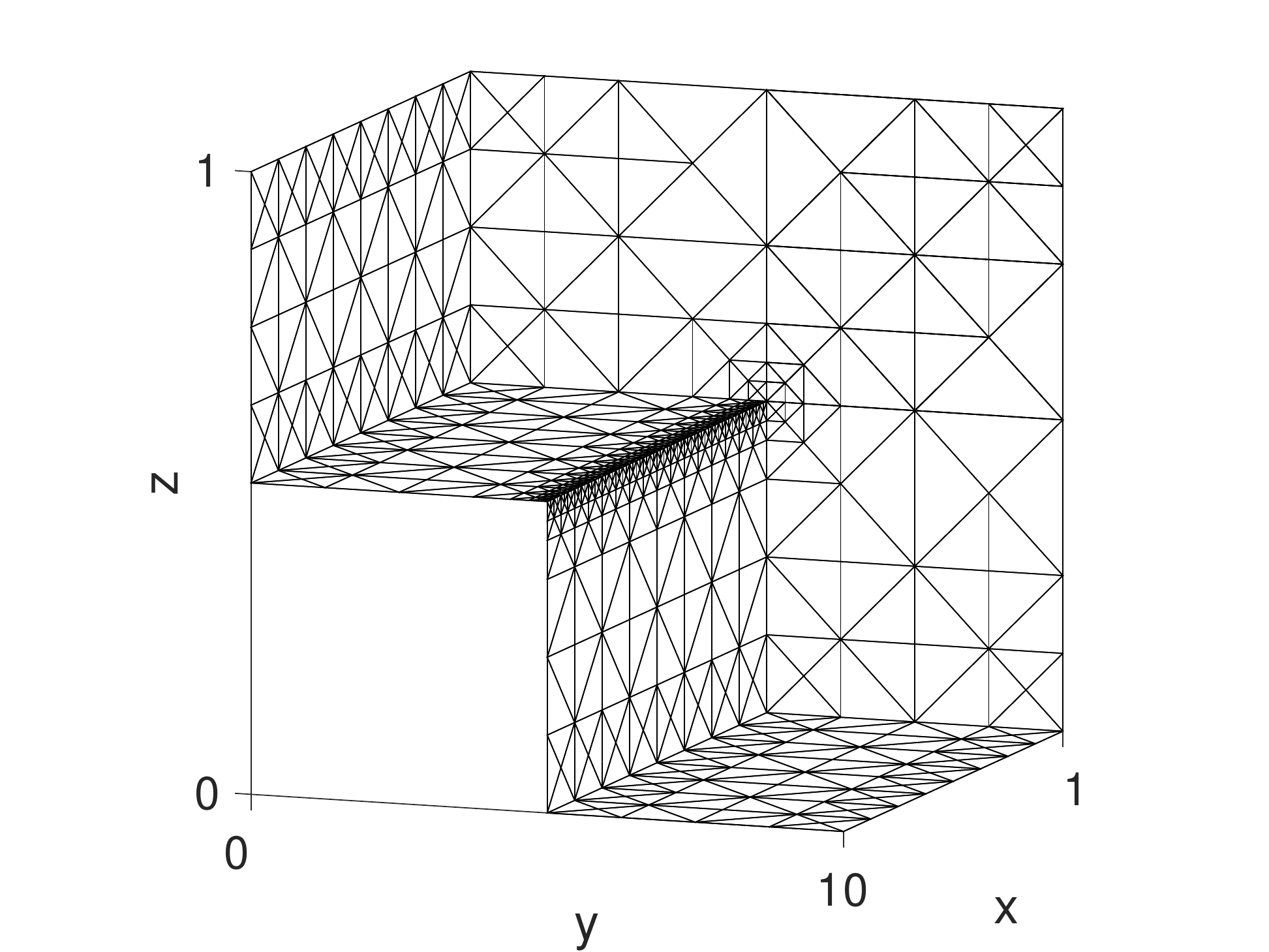}
\caption{Adaptive surface meshes of $\Omega$ (left) and of the subdomain of the coefficient $\mu_2=1000$ (right).}
\label{fig:VarCoeff:Mesh}
\end{figure}
In this example, we choose a discontinuous permeability 
\[
\mu(x,y,z) = \left\{
\begin{array}{ll}
\mu_1 & \text{if } y<1/2 \text{ and } z < 1/2,\\
\mu_2 & \text{otherwise}, 
\end{array}
\right.
\]
on the unit cube $\Omega = (0,1)^3$
and the right hand side $\mathbf{j}=(1,0,0)^t$.
We choose $\mu_1=1$ and vary $\mu_2= 10^\ell$ for $\ell=1,2,3$.
Since the exact solution is unknown, we approximate the error by comparing the numerical approximations
to a reference solution, which is obtained from the last numerical approximation by 8 more adaptive mesh refinements.
In these numerical experiments, we restrict ourselves to $k=2$.
As shown in Figure~\ref{fig:VarCoeff}, adaptive mesh refinement leads to optimal convergence rates for isotropic mesh refinement ($\mathcal{O}((N/\ln(N))^{-2/3})$),
and the efficiency indices are between 1 and 2, independently of the contrast of the discontinuous permeability.
In Figure~\ref{fig:VarCoeff:Mesh}, we display an adaptive mesh after 14 refinement steps with about $5\cdot 10^4$ degrees of freedom.
We observe strong adaptive mesh refinement towards the edge between the points $(0,1/2,1/2)^t$ and $(1,1/2,1/2)^t$.

\section{Conclusion}
\label{sec:conclusion}
We have presented a novel \textit{a posteriori} error estimator for
arbitrary-degree N\'ed\'elec elements for solving magnetostatic
problems. This estimator is based on an equilibration principle and is
obtained by solving only very local problems (on single elements, on
single faces, and on very small sets of nodes). We have derived a
constant-free reliability estimate and a local efficiency estimate,
and presented numerical tests, involving a smooth solution and a
singular solution, that confirm these results. Moreover, the numerical
results show an efficiency index between 1 and 2 in all considered
cases, also for large polynomial degrees $k$, and the dependence on $k$ appears to be small.
Some remaining questions are how to extend the proposed error estimator for domains with curved boundaries or for domains with a smoothly varying permeability.

\bibliographystyle{abbrv}
\bibliography{PosterioriError,Maxwell3dNumerics}

\appendix
\section{Error due to data projection}
\begin{thm}
\label{thm:projectionError}
Let $\vj\in H(\dvg^0;\Omega)\cap H^1(\Omega)^3$ be some divergence-free current distribution, let $\vu\in H_0(\curl;\Omega)\cap H(\dvg^0;\Omega)$ be the solution to
\begin{align*}
(\mu^{-1}\nabla\times \vu,\nabla\times\vw)_{\Omega} &= (\vj,\vw)_{\Omega} &&\forall \vw\in H_0(\curl;\Omega).
\end{align*}
and let $\vu' \in H_0(\curl;\Omega)\cap H(\dvg^0;\Omega)$ be the solution to
\begin{align*}
(\mu^{-1}\nabla\times \vu',\nabla\times\vw)_{\Omega} &= (\Pi_{\RT_k(\Th)}\vj,\vw)_{\Omega} &&\forall \vw\in H_0(\curl;\Omega),
\end{align*}
where $\Pi_{\RT_k(\Th)}$ denotes the standard Raviart--Thomas interpolation operator corresponding to the space $\RT_k(\Th)$ \cite{nedelec80}. Set $\vH:=\mu^{-1}\nabla\times\vu$ and $\vH':=\mu^{-1}\nabla\times\vu'$. If there exists an extension $\vj^*$ of $\vj$ to $\mathbb{R}^3$ such that $\vj^*\in H(\dvg^0;\mathbb{R}^3)\cap H^{k}(\mathbb{R}^3)^3$ for some $k\geq 1$, and such that $\vj^*$ has compact support, then  
\begin{align}
\label{eq:projectionError}
\| \mu^{1/2}(\vH-\vH') \|_{\Omega} &\leq Ch^{k+1}\| \vj^* \|_{H^k(\mathbb{R}^3)^3},
\end{align}
for some positive constant $C$ that does not depend on the mesh size $h$.
\end{thm}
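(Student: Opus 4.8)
The plan is to read the statement as a data-perturbation estimate for the continuous curl--curl problem and to recover the extra power $h^{k+1}$ (beyond the $O(h^k)$ size of the interpolation error itself) from a duality/orthogonality argument. First I would set $\vct{e}:=\vu-\vu'$ and $\boldsymbol{\delta}:=\vj-\Pi_{\RT_k(\Th)}\vj$, and note that, by subtracting the two weak formulations, $\vct{e}\in H_0(\curl;\Omega)\cap H(\dvg^0;\Omega)$ solves
\begin{align*}
(\mu^{-1}\nabla\times\vct{e},\nabla\times\vw)_\Omega &= (\boldsymbol{\delta},\vw)_\Omega &&\forall\,\vw\in H_0(\curl;\Omega).
\end{align*}
Since the Raviart--Thomas interpolation commutes with the divergence and $\vj$ is divergence-free, $\nabla\cdot\boldsymbol{\delta}=0$, so the perturbed datum is admissible and $\vu'$ is well defined. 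Testing with $\vw=\vct{e}$ and using $\vH-\vH'=\mu^{-1}\nabla\times\vct{e}$ yields the energy identity
\begin{align*}
\|\mu^{1/2}(\vH-\vH')\|_\Omega^2 &= (\boldsymbol{\delta},\vct{e})_\Omega.
\end{align*}

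The core of the argument is then to prove $(\boldsymbol{\delta},\vct{e})_\Omega\le Ch\,\|\boldsymbol{\delta}\|_\Omega\,\|\vct{e}\|_{H^1(\Omega)}$. For this I would exploit the orthogonality built into the interpolation operator: on every element $T$ the error $\boldsymbol{\delta}|_T$ is $L^2(T)$-orthogonal to $P_{k-2}(T)^3$, hence to constant vectors once $k\ge2$. Writing $(\boldsymbol{\delta},\vct{e})_\Omega=\sum_{T\in\Th}(\boldsymbol{\delta},\vct{e}-\bar{\vct{e}}_T)_T$, with $\bar{\vct{e}}_T$ the mean of $\vct{e}$ on $T$, a Poincaré/scaling estimate gives $\|\vct{e}-\bar{\vct{e}}_T\|_T\le Ch_T|\vct{e}|_{H^1(T)}$ and the claim follows by Cauchy--Schwarz. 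At this point I would invoke the $H^1$-regularity of divergence-free fields in $H_0(\curl;\Omega)$, namely $\|\vct{e}\|_{H^1(\Omega)}\le C\|\nabla\times\vct{e}\|_\Omega\le C\|\mu^{1/2}(\vH-\vH')\|_\Omega$, so that after dividing through,
\begin{align*}
\|\mu^{1/2}(\vH-\vH')\|_\Omega &\le Ch\,\|\boldsymbol{\delta}\|_\Omega.
\end{align*}

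It then remains to bound the $L^2$ interpolation error, $\|\boldsymbol{\delta}\|_\Omega=\|\vj-\Pi_{\RT_k(\Th)}\vj\|_\Omega\le Ch^k\|\vj^*\|_{H^k(\mathbb{R}^3)^3}$. This is exactly where the compactly supported extension $\vj^*\in H^k(\mathbb{R}^3)^3$ enters: it makes the standard element-wise Bramble--Hilbert estimate for $\Pi_{\RT_k}$ uniform up to $\partial\Omega$ (including boundary-touching elements) and controls the local $H^k$-seminorms by the single global norm $\|\vj^*\|_{H^k(\mathbb{R}^3)^3}$. Combining the last two displays gives \eqref{eq:projectionError}. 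The degenerate case $k=1$, in which $\RT_k$ carries no interior moments and $\boldsymbol{\delta}|_T$ need not be orthogonal to constants, I would treat separately, using the divergence-free structure together with the vanishing face-averages of $\boldsymbol{\delta}\cdot\vn$ to recover the same gain.

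I expect the genuine obstacle to be the regularity shift used in the middle paragraph. The bound $\|\vct{e}\|_{H^1(\Omega)}\le C\|\nabla\times\vct{e}\|_\Omega$ holds on convex domains with smooth $\mu$, but on reentrant polyhedra (e.g.\ the L-brick) and across permeability interfaces only a reduced estimate, with $\vct{e}\in H^{\sigma}(\Omega)$ for some $\sigma<1$, is available. The averaging step then gains only $h^{\sigma}$ in place of $h$, and the rate degrades to $h^{k+\sigma}$. Accordingly, $h^{k+1}$ should be read as the full-elliptic-regularity rate, and tracking this exponent---rather than the interpolation estimates, which are routine---is the only delicate point of the proof.
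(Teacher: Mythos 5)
Your opening step coincides with the paper's: both start from the energy identity $\|\mu^{1/2}(\vH-\vH')\|_\Omega^2=(\vj-\Pi_{\RT_k(\Th)}\vj,\,\vu-\vu')_\Omega$. After that you diverge, and the divergence is where the gap lies. Your route extracts the extra power of $h$ from the pair ``elementwise orthogonality of the interpolation error to constants $+$ Poincar\'e'' followed by the embedding $\|\vct{e}\|_{H^1(\Omega)}\leq C\|\nabla\times\vct{e}\|_\Omega$ for $\vct{e}\in H_0(\curl;\Omega)\cap H(\dvg^0;\Omega)$. As you yourself concede, that embedding fails on non-convex polyhedra (by Costabel's classical result, $H_0(\curl)\cap H(\dvg)$ is not contained in $H^1$ there), so your argument only yields $h^{k+\sigma}$ with $\sigma<1$ in general. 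The theorem, however, asserts the full rate $h^{k+1}$ for an arbitrary simply connected Lipschitz polyhedron --- and the paper uses it precisely in that generality (the L-brick experiment). Reinterpreting the statement as ``the full-elliptic-regularity rate'' is not an option; the claimed rate must be proved without any regularity of $\vct{e}$ beyond $\nabla\times\vct{e}\in L^2$. Your treatment of $k=1$ is also not salvageable as sketched: writing $\int_T\boldsymbol{\delta}\cdot\vct{c}=\int_{\partial T}(\boldsymbol{\delta}\cdot\vn)(\vct{c}\cdot\vx)$ using $\nabla\cdot\boldsymbol{\delta}=0$, the face moments of $\boldsymbol{\delta}\cdot\vn$ vanish only against $P_0(f)$ when $k=1$, while $\vct{c}\cdot\vx|_f$ is affine, so the orthogonality to constants genuinely fails.

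The paper avoids the regularity shift altogether by putting the derivative on the \emph{data} rather than on $\vct{e}$. One writes $\vj^*=\nabla\times(\vR\vj^*)$ with $\vR$ the regularised Poincar\'e integral operator of Costabel--McIntosh, which maps $H^k(\mathbb{R}^3)^3$ boundedly into $H^{k+1}(\mathbb{R}^3)^3$. The commuting-diagram property $\nabla\times\Pi_{\Nd_k^{(2)}(\Th)}=\Pi_{\RT_k(\Th)}\nabla\times$ then gives $\vj-\Pi_{\RT_k(\Th)}\vj=\nabla\times\bigl(\vR\vj^*-\Pi_{\Nd_k^{(2)}(\Th)}(\vR\vj^*)\bigr)$ on $\Omega$, and integrating the curl by parts onto $\vu-\vu'$ turns the right-hand side of the energy identity into $\bigl(\vR\vj^*-\Pi_{\Nd_k^{(2)}(\Th)}(\vR\vj^*),\,\mu(\vH-\vH')\bigr)_\Omega$. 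Cauchy--Schwarz plus the $O(h^{k+1})$ $L^2$-interpolation estimate for the N\'ed\'elec interpolant of the $H^{k+1}$ potential $\vR\vj^*$ gives the result after absorbing $\|\mu^{1/2}(\vH-\vH')\|_\Omega$ into the left side. The key idea your proposal is missing is this lifting of the datum to a vector potential one order smoother, combined with the commuting interpolation property; it replaces both your orthogonality/Poincar\'e step and the problematic $H^1$ embedding, and it handles $k=1$ and non-convex domains uniformly.
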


\begin{proof}
In this proof, $C$ always denotes some positive constant that does not depend on the mesh size $h$.

Let $\vartheta\in\mathcal{C}_0^\infty(\mathbb{R}^3)$, with $\int_{\mathbb{R}^3}\vartheta(\vx) \;\dx=1$, be a smooth function with compact support and let $\vR: H(\dvg^0;\mathbb{R}^3)\rightarrow H^1(\mathbb{R}^3)^3$ be the regularised Poincar\'e integral operator given by
\begin{align*}
\vR\vj (\vx) &:= \int_{\mathbb{R}^3} \vartheta(\vz)\left( -(\vx-\vz)\times \int_{1}^\infty \tau\vj(\tau(\vx-\vz)+\vz) \;\mathrm{d}\tau \right) \;\dz \\
&= \int_{\mathbb{R}^3} -(\vx-\vy)\times\vj(\vy)\left( \int_{1}^\infty t(t-1)\vartheta(\vx+t(\vy-\vx)) \;\mathrm{d}t \right) \;\dy.
\end{align*}
This is exactly the operator $R_2$ of \cite[Definition 3.1]{costabel10}. From \cite[(3.14)]{costabel10}, it follows that $\nabla\times(\vR\vj^*)=\vj^*$ and from \cite[Corollary 3.4]{costabel10}, it follows that $\vR\vj^*\in H^{k+1}(\mathbb{R}^3)^3$ and $\|\vR\vj^*\|_{H^{k+1}(\mathbb{R}^3)^3}\leq C\|\vj^*\|_{H^{k}(\mathbb{R}^3)^3}$, where $\|\cdot\|_{H^k(\mathbb{R}^3)^3}$ denotes the standard norm corresponding to the Sobolev space $H^k(\mathbb{R}^3)^3$.
Furthermore, from \cite[Proposition 2, Remark 4]{nedelec86}, it follows that $\nabla\times\Pi_{\Nd_k^{(2)}(\Th)}(\vR\vj^*)=\Pi_{\RT_k(\Th)}(\nabla\times(\vR\vj^*))$, where $\Pi_{\Nd_k^{(2)}(\Th)}$ denotes the standard N\'ed\'elec interpolation operator corresponding to the curl-conforming N\'ed\'elec space of the second kind $\Nd^{(2)}_k(\Th)$~\cite{nedelec86}. Hence, $\nabla\times\Pi_{\Nd_k^{(2)}(\Th)}(\vR\vj^*)=\Pi_{\RT_k(\Th)}\vj^*$. We can then derive
\begin{align*}
\| \mu^{1/2}(\vH-\vH') \|_{\Omega}^2 &= (\mu^{-1}\nabla\times(\vu-\vu'),\nabla\times(\vu-\vu'))_{\Omega} \\
&= (\vj-\Pi_{\RT_k(\Th)}\vj, \vu-\vu')_{\Omega} \\
&= (\nabla\times(\vR\vj^* - \Pi_{\Nd_k^{(2)}(\Th)}(\vR\vj^*)),\vu-\vu')_{\Omega} \\
&= (\vR\vj^* - \Pi_{\Nd_k^{(2)}(\Th)}(\vR\vj^*), \nabla\times(\vu-\vu'))_{\Omega} \\
&= (\mu^{1/2}(\vR\vj^* - \Pi_{\Nd_k^{(2)}(\Th)}(\vR\vj^*)), \mu^{1/2}(\vH-\vH'))_{\Omega} \\
&\leq \| \mu^{1/2}(\vR\vj^* - \Pi_{\Nd_k^{(2)}(\Th)}(\vR\vj^*)) \|_{\Omega} \| \mu^{1/2}(\vH-\vH') \|_{\Omega} \\
&\leq Ch^{k+1}\| \vR\vj^* \|_{H^{k+1}(\Omega)^3} \| \mu^{1/2}(\vH-\vH') \|_{\Omega} \\
&\leq Ch^{k+1}\| \vj^* \|_{H^{k}(\mathbb{R}^3)^3} \| \mu^{1/2}(\vH-\vH') \|_{\Omega},
\end{align*}
where the sixth line follows from the Cauchy--Schwarz inequality and the seventh line follows from the interpolation properties of the $\Pi_{\Nd^{(2)}_k(\Th)}$ operator \cite[Proposition~3]{nedelec86}. Inequality \eqref{eq:projectionError} now follows immediately from the above.
\end{proof}

\end{document}